\newtheorem{thm}{Theorem}[section]
\newtheorem{thmx}{Theorem}
\newtheorem{lem}[thm]{Lemma}
\newtheorem{prop}[thm]{Proposition}
\newtheorem{cor}[thm]{Corollary}
\newtheorem{quest}[thm]{Question}
\theoremstyle{definition}
\newtheorem{rem}[thm]{Remark}
\newtheorem*{ack}{Acknowledgements}
\theoremstyle{remark}
\numberwithin{equation}{thm}
\def\ass{\operatorname{Ass}}
\def\cm{\mathsf{CM}}
\def\depth{\operatorname{depth}}
\def\Ext{\operatorname{Ext}}
\def\ge{\geqslant}
\def\Lcal\Ical{\operatorname{LocInj}}
\def\Hom{\operatorname{Hom}}
\def\id{\operatorname{id}}
\def\m{\mathfrak{m}}
\def\mod{\operatorname{\mathsf{mod}}}
\def\n{\mathfrak{n}}
\def\ef{\mathsf{f}}
\def\p{\mathfrak{p}}
\def\pd{\operatorname{pd}}
\def\spec{\operatorname{Spec}}
\def\mspec{\operatorname{mSpec}}
\def\syz{\mathsf{\Omega}}
\def\Tor{\operatorname{Tor}}
\def\X{\mathcal{X}}
\def\ZZ{\mathbb{Z}}
\def\Mod{\operatorname{Mod}}
\def\Gpd{\operatorname{Gpd}}
\def\Gfd{\operatorname{Gfd}}
\def\CMpd{\operatorname{CMpd}}
\def\CMfd{\operatorname{CMfd}}
\def\Rpd{\operatorname{Rpd}}
\def\Rfd{\operatorname{Rfd}}
\def\Gcal{\mathcal{G}}
\def\Pcal{\mathcal{P}}
\def\Ical{\mathcal{I}}
\def\Fcal{\mathcal{F}}
\def\Ecal{\mathcal{E}}
\def\Lcal{\mathcal{L}}
\def\Ccal{\mathcal{C}}
\def\Dcal{\mathcal{D}}
\def\Mcal{\mathcal{M}}
\def\Rcal{\mathcal{R}}
\def\Scal{\mathcal{S}}
\def\fd{\operatorname{fd}}
\def\RHom{\mathbf{R}\mathsf{Hom}}
\def\GL{(\mathsf{GL})}
\def\FD{(\mathsf{FD})}
\def\LF{(\mathsf{LF})}
\begin{document}
\allowdisplaybreaks
\title[]{Govorov--Lazard and Finite deconstructibility for Gorenstein and restricted homological dimensions} 
\author[Souvik Dey, Michal Hrbek, Giovanna Le Gros]{Souvik Dey, Michal Hrbek, Giovanna Le Gros} 

\address{Souvik Dey: Department of Mathematical Sciences, University of Arkansas, 850 West Dickson Street Fayetteville, Arkansas 72701 United States} 
\address{Michal Hrbek: Institute of Mathematics of the Czech Academy of Sciences, Žitná 25,
115 67 Prague, Czech Republic}
\address{  Giovanna Le Gros: Faculty of Mathematics and Physics,
Department of Algebra,
Charles University, 
Sokolovsk\'{a} 83, 186 75 Praha, 
Czech Republic}
\email{souvikd@uark.edu, hrbek@math.cas.cz, giovanna.legros@matfyz.cuni.cz}  

\subjclass[2020]{13C60, 13C14, 13D05, 13D07}
\thanks{}
\begin{abstract}

Over Cohen–Macaulay rings admitting a pointwise dualizing module, we show that the class of modules of restricted projective dimension bounded by any integer is finitely deconstructible and that the class of modules of restricted flat dimension bounded by any integer satisfies the Govorov–Lazard property. Along the way, we prove the corresponding result for Gorenstein projective and flat dimensions over (locally) Gorenstein rings. Outside of Cohen–Macaulay rings, we consider analogous properties for restricted projective dimension zero and restricted flat dimension zero and establish them for commutative noetherian rings of finite Krull dimension. This has consequences for the corresponding classes of finitely generated modules being preenveloping in certain cases and provides generalizations of Holm’s results on structure of balanced big Cohen–Macaulay modules in various directions.


\end{abstract}
\maketitle
\section*{Introduction} 
It is desirable for a class of modules determined by a given homological condition to be built up in a tractable way from modules of finite presentation. A classical result of this kind due to Govorov \cite{Gov65} and Lazard \cite{Laz69} states that any flat module can be presented as a direct limit of finitely presented projective modules. Naturally, the \textit{Govorov--Lazard property} was studied for relative variants of flat dimensions. Enochs and Jenda \cite[10.3.8]{EJ11} showed that any Gorenstein flat module is a direct limit of finitely presented Gorenstein projective modules over Iwanaga--Gorenstein rings, while Beligiannis and Krause \cite{Beligiannis-thick-2008} and Holm and Jørgensen \cite{HJ11} showed that this fails in general for non-Gorenstein rings. Holm \cite{Hol17} also showed that Cohen--Macaulay flat modules enjoy the Govorov--Lazard property over any local Cohen--Macaulay ring admitting a dualizing module. 

Replacing flat dimension by the projective dimension of large modules, the resulting classes of modules are rarely closed under direct limits, and thus a different kind of deconstruction should be considered. We say that a class of modules is \textit{finitely deconstructible} if any module in it can be presented as a direct summand of a transfinite extension of finitely presented modules from the same class. An unimpressive but important case is the projective modules themselves, reflecting the simple fact that any projective module is a direct summand of a free one. However, the finite deconstructibility of the class $\Pcal_1$ of modules of projective dimension at most one is already a subtle problem, studied in detail by Bazzoni and Herbera \cite{BH09}. Also more recently, Positselski \cite{resol} demonstrated a general theory for countably presented versions of Govorov--Lazard and deconstructibility, establishing it for various dimensions under relatively mild assumptions.

The study of the classes we are interested in is inextricably intertwined with approximation theory. In this paper, we make strong use of approximations in many different forms. We will not further outline details of this relation here, however it worth mentioning how the Govorov--Lazard property can be used to show the existence of certain approximations. Under some natural assumptions, a class satisfies the Govorov--Lazard property and is closed under products if and only if its finitely presented constituents form a preenveloping class, a result due to Crawley-Boevey (cf. \cref{envelop1}), see also \cite[Theorem C]{Hol17}. 

From now on, we restrict to a commutative noetherian ring $R$. In the recent work \cite{hrbek2023finite}, it was shown that $R$ is Cohen--Macaulay if and only if all of the classes $\Pcal_n$ of modules of projective dimension bounded by an integer $n$ are finitely deconstructible. Also, the Govorov--Lazard property holding for all the classes $\Fcal_n$ of modules of bounded flat dimension was shown to characterize the somewhat wider class of ``almost Cohen--Macaulay'' rings. Our first main result is a Gorenstein analog of these.
\begin{thmx}[\cref{Gpd-deconstr}, \cref{Gfd-deconstr}]\label{ThmA}
Let $R$ be a (locally) Gorenstein ring. Then the following ``Govorov--Lazard'' and ``Finite Deconstructibility'' properties hold for Gorenstein flat and Gorenstein projective dimensions, respectively:
\begin{enumerate}
    \item[$\GL$] Any $R$-module of Gorenstein flat dimension at most $n$ is a direct limit of finitely generated $R$-modules of Gorenstein projective dimension at most $n$.
    \item[$\FD$] Any $R$-module of Gorenstein projective dimension at most $n$ is a direct summand of a transfinite extension of finitely generated $R$-modules of Gorenstein projective dimension at most $n$.
\end{enumerate}
\end{thmx}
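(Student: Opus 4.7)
The plan is to reduce both parts to their $n=0$ cases via short-exact-sequence arguments and then handle the base cases using cotorsion-pair machinery adapted to (locally) Gorenstein rings.

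For finite deconstructibility (FD), I would first treat the base case $n=0$: every Gorenstein projective module is a direct summand of a transfinite extension of finitely generated Gorenstein projectives. Over a (locally) Gorenstein ring, the class $\Gcal\Pcal$ coincides (for modules of finite Gorenstein projective dimension) with the left $\Ext$-orthogonal of the class of modules of finite projective dimension, so it is the left half of a hereditary cotorsion pair. The key step is to show this pair is of \emph{finite type}, i.e.\ cogenerated by finitely generated modules; the passage from the Iwanaga--Gorenstein case (where this is classical) to the locally Gorenstein case should proceed via a local-to-global reduction, exploiting the good localization behaviour of Gorenstein projective dimension. Once finite type is established, the Eklof--Trlifaj completeness theorem together with \v{S}\v{t}ov\'i\v{c}ek's deconstructibility theorem for cotorsion pairs of finite type yield the statement. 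For general $n$, I would use the classical characterization that any $M$ with $\Gpd_R M \le n$ sits in a short exact sequence $0 \to K \to G \to M \to 0$ with $G \in \Gcal\Pcal$ and $\pd_R K \le n-1$. Combining the $n=0$ case with the finite deconstructibility of $\Pcal_{n-1}$ from \cite{hrbek2023finite} (available since Gorenstein rings are Cohen--Macaulay) and concatenating the resulting filtrations, one obtains the general case.

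For the Govorov--Lazard property (GL), I would first verify that $\{M : \Gfd_R M \le n\}$ is closed under direct limits, which follows from the characterization via $\Tor$-orthogonality against modules of finite injective dimension. The $n=0$ case---every Gorenstein flat module is a direct limit of finitely generated Gorenstein projectives---is classical for Iwanaga--Gorenstein rings by Enochs--Jenda and extends to locally Gorenstein rings using the finite-type property of the Gorenstein flat cotorsion pair in the style of \v{S}aroch--\v{S}\v{t}ov\'i\v{c}ek. For general $n$, given $M$ with $\Gfd_R M \le n$, I would write $M$ as a direct limit of its finitely generated submodules $M_\alpha$, produce compatible finitely generated approximations $\tilde M_\alpha$ of $\Gpd \le n$ via the Auslander--Buchweitz technology, and pass to the direct limit. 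Alternatively, one may deduce (GL) by applying Crawley-Boevey's theorem (\cref{envelop1}) once the class of finitely generated modules of $\Gpd \le n$ is known to be preenveloping.

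The main obstacle will be establishing finite type of the Gorenstein cotorsion pairs outside the Iwanaga--Gorenstein setting. Classical arguments assume a global bound on self-injective dimension, and relaxing this to the locally Gorenstein hypothesis requires a careful local-to-global reduction, likely exploiting the existence of a pointwise dualizing complex and the compatibility of Gorenstein projective/flat dimensions with localization.
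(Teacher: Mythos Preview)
Your reduction of $\FD$ to the base case has a genuine gap. The short exact sequence you invoke, $0 \to K \to G \to M \to 0$ with $G \in \Gcal\Pcal_0$ and $\pd_R K \le n-1$, places $M$ as a \emph{cokernel}. Even if both $G$ and $K$ are direct summands of transfinite extensions of modules in $\Gcal\Pcal_n^{<\omega}$, there is no way to ``concatenate filtrations'' to deduce the same for $M$: the left-hand class of a hereditary cotorsion pair is closed under extensions and under kernels of epimorphisms, but \emph{not} under cokernels of monomorphisms. (Already for $\Ccal = \Pcal_0$ over $\mathbb{Z}$ the sequence $0 \to \mathbb{Z} \to \mathbb{Z} \to \mathbb{Z}/2 \to 0$ shows this.)

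The paper repairs this by using a different sequence that puts $M$, up to a projective summand, in the \emph{middle}. Starting from $0 \to M \to P \to N \to 0$ with $N \in \Gcal\Pcal_0$ and $P \in \Pcal_n$ (cf.\ \cite[Lemma 2.17]{func}), a pullback along a projective cover of $N$ yields $0 \to \syz N \to M \oplus Q \to P \to 0$ with $Q$ projective. Now $\syz N \in \Gcal\Pcal_0$ lies in the left class generated by $\Gcal\Pcal_n^{<\omega}$ by the $n=0$ case, and $P \in \Pcal_n$ lies there by \cite[Theorem A]{hrbek2023finite}; closure under extensions and summands finishes. The same issue and the same fix apply to your $\GL$ reduction: the paper uses $0 \to M \to H \to A \to 0$ with $H \in \Fcal_n$ and $A \in \Gcal\Fcal_0$, then exploits that $\varinjlim \Gcal\Pcal_n^{<\omega}$ is the left class of a hereditary cotorsion pair and hence closed under kernels of epimorphisms. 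Your proposed direct-limit-of-approximations argument does not supply this closure property, and the Crawley--Boevey route is circular without independently establishing preenveloping.

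For the base cases your outline is in the right spirit (finite type of the relevant cotorsion pair), though the paper's route is more concrete: it first proves $\GL$ and $\FD$ for the \emph{restricted} dimensions $\Rcal\Fcal_0$ and $\Rcal\Pcal_0$ via maximal Cohen--Macaulay approximations (\cref{MCMloc}, \cref{rpd-cotorpair}), then shows $\Gcal\Pcal_0 = \Rcal\Pcal_0$ and $\Gcal\Fcal_0 = \Rcal\Fcal_0$ over Gorenstein rings (\cref{gpd-rpd}).
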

If $R$ is not Gorenstein, both the assertions of \cref{ThmA} can fail, see \cref{R-Gor-equiv-cond}. Therefore, we consider instead the restricted flat and restricted projective dimensions as introduced by Christensen, Foxby, and Frankild \cite{CFF02}. If $R$ is Cohen--Macaulay with a (pointwise) dualizing module, these coincide with the Cohen--Macaulay flat and projective dimensions of Holm and Jørgensen \cite{HJ07}, see \cref{RfdCMfd}. Since the restricted dimensions recover the Gorenstein dimensions if $R$ is Gorenstein, the following generalizes \cref{ThmA} to several non-Gorenstein contexts.
\begin{thmx}\label{ThmB}[\cref{gor}, \cref{CMproj}, \cref{thm-acm}]
    Let $R$ be a commutative noetherian ring. Then:
    \begin{enumerate}
        \item Assume that $R$ is of finite Krull dimension. Then:
        \begin{enumerate}
            \item[(i)] The class of restricted flat modules satisfies the Govorov--Lazard property.
            \item[(ii)] The class of restricted projective modules satisfies the finite deconstructibility property.
        \end{enumerate}
        \item Assume that $R$ is Cohen--Macaulay with a pointwise dualizing module. Then for any $n \geq 0$:
        \begin{enumerate}
            \item[(i)] The class of modules of Cohen--Macaulay flat dimension bounded by $n$ satisfies the Govorov--Lazard property.
            \item[(ii)] The class of modules of Cohen--Macaulay projective dimension bounded by $n$ satisfies the finite deconstructibility property.
        \end{enumerate}
        \item Assume that $R$ is almost Cohen--Macaulay of finite Krull dimension. Then for any $n \geq 0$:
        \begin{enumerate}
            \item[(i)] The class of modules of restricted flat dimension bounded by $n$ satisfies the Govorov--Lazard property.
        \end{enumerate}
    \end{enumerate}
\end{thmx}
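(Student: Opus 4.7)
The plan is to prove the three parts in parallel, with the technical core being an extension of Holm's Govorov--Lazard theorem \cite{Hol17} from $\CMfd = 0$ (the $n=0$ case) to $\CMfd \le n$ for all $n \ge 0$. The reduction rests on two observations: first, that a module of $\CMfd \le n$ (resp.\ $\CMpd \le n$) admits a length-$n$ resolution by CM-flat (resp.\ CM-projective) modules when $R$ has a pointwise dualizing module; and second, that $\CMfd$ and $\Rfd$ (resp.\ $\CMpd$ and $\Rpd$) coincide on such rings by \cref{RfdCMfd}. Thus each Govorov--Lazard claim amounts to lifting direct-limit presentations from the resolution level to a presentation of $M$ itself, while each finite deconstructibility claim reduces to showing that the relevant class is the left half of a cotorsion pair generated by a set.

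For part (2)(i), given $M$ with $\CMfd(M) \le n$ and a CM-flat resolution $0 \to F_n \to \cdots \to F_0 \to M \to 0$, I would invoke Holm's theorem to write each $F_i = \varinjlim_{\lambda} F_{i,\lambda}$ with $F_{i,\lambda}$ f.g.\ CM-projective, then inductively assemble a compatible directed system of length-$n$ resolutions whose term-wise limit recovers the original resolution of $M$. The construction proceeds via iterated pullbacks, following the template of the analogous Gorenstein-flat argument in \cref{Gfd-deconstr}. A dimension-shift lemma --- short exact sequences $0 \to K \to F \to M \to 0$ with $F$ CM-flat satisfy $\CMfd(K) \le \CMfd(M) - 1$ --- guarantees that the finite-level terms satisfy $\CMpd(M_\lambda) \le n$. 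Part (2)(ii) is then obtained from cotorsion-pair machinery: the class $\CMpd_{\le n}$ is cogenerated by a set built from higher syzygies of test modules associated with the dualizing module, so Eklof and Trlifaj's theorem together with Hill's Lemma yield finite deconstructibility, just as in the classical projective-dimension argument of \cite{hrbek2023finite}.

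For parts (1) and (3), the finite Krull dimension furnishes the uniform bound that permits a localization-based reduction. In (1)(i) the class of $\Rfd = 0$ modules is product-closed, so by Crawley-Boevey's criterion (\cref{envelop1}) the Govorov--Lazard property is equivalent to its f.g.\ constituents forming a preenveloping class, which I would verify by constructing approximations locally at primes and patching using the Krull dimension bound to control the recursion. Part (1)(ii) follows by setting up a cotorsion pair whose left side is $\Rpd_{\le 0}$ and whose generating set is of bounded complexity, again thanks to finite Krull dimension. For part (3), the almost Cohen--Macaulay hypothesis $\dim R_\p - \depth R_\p \le 1$ for all $\p$ means the CM-flat argument of (2)(i) can be carried out after one additional syzygy step, bringing intermediate terms back into the CM-flat category at the cost of raising dimensions by~$1$.

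The main obstacle I anticipate is the level-lifting step in part (2)(i). The $n = 0$ case proved by Holm rests on a direct characterization of CM-flat modules via $\Tor$-vanishing against the dualizing module, which does not immediately transfer to intermediate syzygies; threading direct limits through a resolution while preserving the $\CMpd \le n$ bound requires the dimension-shift lemma to interact coherently with the pullback construction and with Mittag-Leffler-type control of the transition maps. The Gorenstein analog from \cref{ThmA} provides the precise blueprint, and once this technical core is secured, parts (1), (2)(ii), and (3) follow from formal deconstruction and localization arguments.
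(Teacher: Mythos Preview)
Your plan diverges from the paper on all three parts, and for parts~(1) and~(3) the divergence creates real difficulties.

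For part~(1), the paper gives a near one-line argument: since $d = \dim R < \infty$, every $R$-module has $\Rpd \le d$, so the set $\Scal$ of $d$-th syzygies of cyclic modules lies in $\Rcal\Pcal_0^{<\omega}$; by the (dual) Baer criterion $\Scal^\perp = \Ical_d = \Ical_{<\infty}$ and $\Scal^\top = \Fcal_d = \Fcal_{<\infty}$, so $\Scal$ already generates both the cotorsion pair $(\Rcal\Pcal_0, \Ical_{<\infty})$ and the Tor-pair $(\Rcal\Fcal_0, \Fcal_{<\infty})^\top$. Your proposed route via Crawley--Boevey is circular in spirit: that criterion says $\GL$ plus product-closure is \emph{equivalent} to preenveloping, so trading $\GL$ for preenveloping and then ``constructing approximations locally and patching'' replaces the question by one at least as hard, whereas the Baer-criterion trick settles it immediately.

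For part~(2), the paper's key move --- which you do not use --- is to pass to the trivial extension $R \ltimes \Omega$, which is Gorenstein, and invoke \cref{ThmA} there. Since $\CMfd_R = \Gfd_{R\ltimes\Omega}$ and $\CMpd_R = \Gpd_{R\ltimes\Omega}$ by \cref{RfdCMfd}, and since a direct-limit presentation or transfinite filtration over $R \ltimes \Omega$ is in particular one over $R$, the transfer back is a one-liner. This completely sidesteps the resolution-lifting obstacle you yourself flag. Your direct approach might be salvageable, but note it is \emph{not} the blueprint of \cref{Gfd-deconstr}: that proof uses a single short exact sequence $0 \to M \to H \to A \to 0$ with $H \in \Fcal_n$ and $A \in \Gcal\Fcal_0$, together with closure of $\varinjlim \Ccal^{<\omega}$ under kernels of epimorphisms, and it crucially relies on the separate fact that $\Fcal_n$ satisfies $\GL$ over Cohen--Macaulay rings --- not on threading limits through a full resolution via pullbacks.

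For part~(3) there is a genuine gap. Almost Cohen--Macaulay rings need not be Cohen--Macaulay and need not admit any (pointwise) dualizing module, so CM-flat dimension in the sense of part~(2) is unavailable and ``carrying out the CM-flat argument of (2)(i) after one additional syzygy step'' has no meaning. The paper takes an entirely different route: it uses the classification of hereditary Tor-pairs with right-hand class inside $\Fcal_{<\infty}$ from \cite{HHLG24} to reduce $\GL$ for $\Rcal\Fcal_n$ to the concrete criterion (\cref{criterionRFn}(iii)) that for each prime $\p$ there exists $M \in \Rcal\Pcal_{\depth R_\p}^{<\omega}$ with $\p \in \ass(M)$. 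For almost Cohen--Macaulay $R$ one has $\depth R_\p = \grade(\p)$, so $M = R/(a_1,\dots,a_n)$ for a regular sequence $(a_1,\dots,a_n) \subseteq \p$ of length $n = \depth R_\p$ does the job. Nothing in your outline connects to this mechanism.
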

We introduce the general formulations of the Govorov--Lazard $\GL$ and Finite Deconstructibility $\FD$ properties in \cref{S:setup} and relate them to cotorsion theory. In \cref{S:restricted} we focus on classes of modules of bounded restricted homological dimensions and show that \cref{ThmB}(1) follows from a somewhat basic argument using the Baer criterion. The finite Krull dimension assumption is removed in \cref{SS:dualizing} under the existence of a dualizing module. \cref{ThmA} is proved in \cref{S:Gor}, the argument uses the results for classical homological dimensions of \cite{hrbek2023finite}. The proof of \cref{ThmB}(2) builds further on the Gorenstein case and occupies most of \cref{S:CMrings}. The final \cref{S:ACM} establishes a general criterion \cref{criterionRFn} for the Govorov--Lazard property for restricted flat dimension to hold using the recent classification of hereditary Tor-pairs of \cite{HHLG24}. We do not know if this criterion, and therefore the Govorov--Lazard property, holds for restricted flat dimension over any commutative noetherian ring, see \cref{Q:Rfd}.

\begin{ack}
All three authors were supported by the GAČR project 23-05148S. The first author was also supported by Charles University Research Centre
program No. UNCE/24/SCI/022. The second author was also supported by the Academy of Sciences of the Czech Republic (RVO 67985840). All of the work done in this paper took place when the first author was a research scholar at the Department of Algebra of Charles University, Prague, and he is very grateful for the outstanding atmosphere fostered by the department. 
\end{ack}

\section{Setup and preliminary results}\label{S:setup}
Throughout the paper, $R$ will always denote a ring which is commutative, noetherian, but not necessarily local nor of finite Krull dimension, unless so specified. All subcategories are full and closed under isomorphisms. By $\Mod R$ we denote the category of all $R$-modules and by $\mod R$ the subcategory of finitely generated $R$-modules. Given a subcategory $\Ccal$ of $\Mod R$ we let $\Ccal^{<\omega}$ denote the subcategory $\Ccal \cap \mod R$ of those $R$-modules in $\Ccal$ which are finitely generated. 

In the paper, we will study the following two properties each of which allows one to build objects in a class $\Ccal$ from those in $\Ccal^{<\omega}$:

\begin{enumerate}
    \item[$\GL$] Any module in $\Ccal$ is isomorphic to a direct limit of modules from $\Ccal^{<\omega}$.
    \item[$\FD$] Any module in $\Ccal$ is a direct summand in a transfinite extension of modules from $\Ccal^{<\omega}$.
\end{enumerate}
The first is an abbreviation for ``Govorov--Lazard'' property, after the classical theorem of Govorov and Lazard, which established $\GL$ for the class $\Fcal_0$ of all flat $R$-modules. Note that, by a standard argument, $\Ccal$ satisfies $\GL$ if and only if it has the following property: Any map $F \to C$ from a finitely presented module $F$ to $C \in \Ccal$ factorizes through a module from $\Ccal^{<\omega}$, see \cite[Proposition 2.1]{Lenzing83}. The second abbreviates ``finite deconstructibility'', a terminology borrowed from the theory of cotorsion pairs, see \cite[\S 8]{GT12}. A prototypical class satisfying $\FD$ is the class $\Pcal_0$ of all projective $R$-modules, as any projective module is a direct summand of a free one.

In our goal of studying $\GL$ and $\FD$ for the classes of modules satisfying a bound on various homological dimensions, it will be useful to also consider the following lifting property of a class $\Ccal$.
\begin{enumerate}
    \item[$\LF$] For any prime ideal $\p \in \spec R$ and any $M \in \Ccal \cap \mod {R_\p}$ there is $N \in \Ccal^{<\omega}$ such that $M$ is a direct summand of $N_\p$.
\end{enumerate}
There is a general connection between $\GL$ and $\LF$ providing by the following two Lemmas.
\begin{lem}\label{lift-GL}
    Let $\Ccal$ be a subcategory of $\Mod R$ which satisfies $\GL$. Then for any flat ring epimorphism $R \to S$ and any finitely generated $S$-module $M$ which belongs to $\Ccal$ as an $R$-module there is $N \in \Ccal^{<\omega}$ such that $M$ is a direct summand of $N \otimes_R S$.
    
    In particular, $\Ccal$ satisfies $\LF$.
\end{lem}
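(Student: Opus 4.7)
The plan is to invoke the equivalent reformulation of $\GL$ recalled after its definition --- any $R$-linear map from a finitely presented $R$-module into $M \in \Ccal$ factors through some $N \in \Ccal^{<\omega}$. Since $M$ need not be finitely generated over $R$, we cannot feed $M$ itself into $\GL$; instead, I would choose $S$-module generators $m_1, \dots, m_k$ of $M$ and form the $R$-submodule $M_0 := Rm_1 + \cdots + Rm_k \subseteq M$. Then $M_0$ is finitely generated, hence finitely presented as an $R$-module by the noetherian hypothesis on $R$, and by construction $SM_0 = M$.

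The key step is to observe that the composite
\[
\varphi\colon M_0 \otimes_R S \xrightarrow{\iota \otimes_R S} M \otimes_R S \xrightarrow{\ \cong\ } M,
\]
where $\iota\colon M_0 \hookrightarrow M$ is the inclusion and the second arrow is the canonical isomorphism induced by the ring epimorphism $R \to S$, is an isomorphism of $S$-modules. It is surjective because $SM_0 = M$, and injective because $\iota$ is injective and $S$ is $R$-flat. Applying $\GL$ to $\iota$ now yields a factorization $\iota = \nu \circ \mu$ with $\mu\colon M_0 \to N$, $\nu\colon N \to M$, and $N \in \Ccal^{<\omega}$; tensoring with $S$ expresses $\varphi$ as $(\nu \otimes_R S) \circ (\mu \otimes_R S)$, so $\nu \otimes_R S\colon N \otimes_R S \to M$ is a split epimorphism (with section $(\mu \otimes_R S)\circ \varphi^{-1}$), exhibiting $M$ as a direct summand of $N \otimes_R S$. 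The ``in particular'' clause is immediate by specializing to $S = R_\p$ for each $\p \in \spec R$.

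The main insight, rather than an obstacle, is the choice of $M_0$: one must feed $\GL$ a finitely presented $R$-module mapping into $M$, which requires descending from $S$-module generators of $M$ to an $R$-submodule generating $M$ over $S$. Once this is done the argument is formal, relying only on flatness of $S$ over $R$ together with the canonical isomorphism $M \otimes_R S \cong M$ provided by the ring epimorphism property.
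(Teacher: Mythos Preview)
Your proof is correct and follows essentially the same approach as the paper's: both arguments hinge on the identification $M \otimes_R S \cong M$ for an $S$-module $M$ under a ring epimorphism, followed by an application of $\GL$ and tensoring with $S$ to extract a split epimorphism. The only cosmetic difference is that the paper invokes the direct-limit formulation of $\GL$ (writing $M \cong \varinjlim M_i$, tensoring, and using that $M$ is finitely presented over $S$ to split off from some $M_i \otimes_R S$), while you use the equivalent factorization characterization applied to the explicit inclusion $M_0 \hookrightarrow M$; your variant has the minor advantage of not needing to note that $S$ is noetherian.
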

\begin{proof}
    By the assumption, $M \cong \varinjlim_{i \in I}M_i$, where $M_i \in \Ccal^{<\omega}$ for all $i \in I$. Applying the extension of scalars functor $- \otimes_R S$, we obtain the direct limit representation $M = \varinjlim_{i \in I}(M_i \otimes_R S)$ in $\Mod {S}$. As $M$ is finitely presented as an $S$-module, there is $i \in I$ such that the canonical map $(M_i \otimes_R S) \to M$ is a split epimorphism, which concludes the proof by setting $N = M_i$.

    The final claim follows from taking $R \to S$ to go through the localisations $R \to R_\p$ for $\p \in \spec R$.
\end{proof}

There is a sort of converse of \cref{lift-GL} for some well behaved classes. In particular, classes fitting into Tor-pairs satisfy the closure assumptions of \cref{LF-to-GL}, so the lemma can be used to prove a sort of local-to-global principle. In the following we let $\Ccal_\p \coloneq \{C_\p \mid C \in \Ccal\} \subseteq \Mod{R_\p}$ for any $\p \in \spec R$.
\begin{lem}\label{LF-to-GL}
    Let $\Ccal$ be a class closed under $\varinjlim$, direct summands, extensions, and is determined locally in the sense that: 
    $$C \in \Ccal \text{ if and only if } C_\m \in \Ccal \text{ for every } \m \in \mspec R.$$ 
    Moreover, assume that $\Ccal_\m$ satisfies $\GL$ in $\Mod R_\m$ and that $\Ccal$ satisfies $\LF$. Then $\Ccal$ satisfies $\GL$.

\end{lem}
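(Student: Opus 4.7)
The plan is to verify Lenzing's factorization criterion for $\GL$ recalled just before the lemma: it suffices to show that any map $g\colon F\to C$ from a finitely presented module $F$ to some $C\in\Ccal$ factors through a module in $\Ccal^{<\omega}$. Fix such a pair $(F,g)$.

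For each maximal ideal $\m$, the localized map $g_\m\colon F_\m\to C_\m$ lies in $\Ccal_\m$, and since $\Ccal_\m$ satisfies $\GL$ we factor $g_\m$ through some $M_\m\in(\Ccal_\m)^{<\omega}$; applying $\LF$ to $M_\m$ produces $N(\m)\in\Ccal^{<\omega}$ for which $M_\m$ is a direct summand of $N(\m)_\m$, so composing gives a local factorization $g_\m=\beta_\m\circ\alpha_\m$ through $N(\m)_\m$. Since $R$ is noetherian, both $F$ and $N(\m)$ are finitely presented, so the canonical isomorphisms $\Hom_R(F,N(\m))_\m\cong\Hom_{R_\m}(F_\m,N(\m)_\m)$ and $\Hom_R(N(\m),C)_\m\cong\Hom_{R_\m}(N(\m)_\m,C_\m)$ let us represent $\alpha_\m$ and $\beta_\m$ by global morphisms $\tilde\alpha_\m\colon F\to N(\m)$ and $\tilde\beta_\m\colon N(\m)\to C$ up to denominators. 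Using finite generation of $F$ (so that a global map out of $F$ vanishing after localization at $\m$ is annihilated by some element outside $\m$), we clear denominators to arrange that $\tilde\beta_\m\circ\tilde\alpha_\m=v_\m\cdot g$ as honest maps $F\to C$, for some $v_\m\in R\setminus\m$.

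The family $\{v_\m\}_{\m\in\mspec R}$ is not contained in any maximal ideal, so it generates the unit ideal of $R$; noetherianity yields a finite subfamily $v_{\m_1},\ldots,v_{\m_k}$ and elements $r_1,\ldots,r_k\in R$ with $\sum_i r_iv_{\m_i}=1$. Set $N=\bigoplus_{i=1}^kN(\m_i)$, which lies in $\Ccal^{<\omega}$ by closure of $\Ccal$ under extensions. Define $\alpha\colon F\to N$ to have components $\tilde\alpha_{\m_i}$ and $\beta\colon N\to C$ by $\beta(y_1,\ldots,y_k)=\sum_i r_i\tilde\beta_{\m_i}(y_i)$. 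Then $\beta\circ\alpha=\sum_i r_iv_{\m_i}g=g$, giving the sought global factorization.

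The principal obstacle is the denominator-clearing step: the local factorizations lift only to global factorizations of the \emph{scalar multiples} $v_\m g$ rather than of $g$ itself, and one must combine noetherianity of $R$ with a partition-of-unity style construction over a finite set of maximal ideals in order to assemble these approximate factorizations into an honest factorization of $g$ through an object of $\Ccal^{<\omega}$.
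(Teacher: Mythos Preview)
Your proof is correct and follows a genuinely different route from the paper's. The paper invokes \cite[Theorem 2.3]{hugel2004direct} to observe that $\varinjlim \Ccal^{<\omega}$ is itself the left-hand class of a Tor-pair and is therefore determined locally; then, to show $M\in\Ccal$ lies in $\varinjlim\Ccal^{<\omega}$, it suffices to check this after localizing at each maximal ideal, which follows immediately from the local $\GL$ assumption together with $\LF$. Your argument instead verifies Lenzing's factorization criterion directly, producing the global factorization by a partition-of-unity construction from local factorizations. Your approach is more elementary in that it avoids the external Tor-pair result, and in fact it never uses the ``determined locally'' hypothesis on $\Ccal$ nor the closure under direct summands (you use closure under direct limits only to ensure $M_\m\in\Ccal$ so that $\LF$ applies, and closure under extensions to form the finite direct sum $N$). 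The paper's proof, on the other hand, is shorter and more conceptual, and it isolates the reason the argument works: the target class $\varinjlim\Ccal^{<\omega}$ is itself local in nature.
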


\begin{proof}
    By \cite[Theorem 2.3]{hugel2004direct}, $\varinjlim \Ccal^{<\omega}$ is a class in a Tor-pair, so in particular it is also determined locally. Thus it suffices to show that for $M \in \Ccal$, $M_\m \in \varinjlim \Ccal^{<\omega}$ for each $\m \in \mspec R$. 

    By the assumption, $\Ccal_\m = \varinjlim(\mod R_\m \cap \Ccal_\m)$. The assumption $\LF$ implies that $\mod R_\m \cap \Ccal \subseteq \varinjlim \Ccal^{<\omega}$, so the claim follows as $\varinjlim \Ccal^{<\omega}$ is closed under direct limits. 
\end{proof}

\subsection{The case of cotorsion pairs and Tor-pairs}\label{ss:defn-cot-tor-pair} In our cases of interest, the class $\Ccal$ will fit into the structure of a cotorsion pair or even a Tor-pair, which allows for a more classical and useful reformulation of conditions $\GL$ and $\FD$. Given a subcategory $\Ccal$ of $\Mod R$, let $$\Ccal^\perp = \{N \in \Mod R \mid \Ext_R^i(M,N) = 0 ~\forall M \in \Ccal ~\forall i>0\}.$$ and define ${}^\perp \Ccal$ analogously. A \textit{hereditary cotorsion pair} is a pair $(\Ccal,\Dcal)$ of subcategories of $\Mod R$ such that $\Dcal = \Ccal^\perp$ and $\Ccal = {}^\perp \Dcal$. For a class of modules $\X$, we say that $(\Ccal,\Dcal)$ is generated by $\X$ if $\Dcal = \X^\perp$, and analogously it is cogenerated by $\X$ if $\Ccal = {}^\perp\X$.
For any hereditary cotorsion pair $(\Ccal, \Dcal)$, $\Ccal$ is closed under direct summands and transfinite extensions and thus the property $\FD$ gives a description of $\Ccal$ as precisely the direct summands of transfinite extensions of modules from $\Ccal^{<\omega}$. In fact, the theorem of Eklof and Trlifaj \cite[Corollary 6.14]{GT12} shows that $\Ccal$ satisfies $\FD$ if and only if $\Dcal = (\Ccal^{<\omega})^\perp$, that is, the hereditary cotorsion pair $(\Ccal,\Dcal)$ is generated by finitely generated modules.

Similarly, we let $$\Ccal^\top = \{N \in \Mod R \mid \Tor_i^R(M,N) = 0 ~\forall M \in \Ccal ~\forall i>0\}.$$ and define ${}^\top \Ccal$ analogously. A \textit{hereditary Tor-pair} is a pair $(\Ccal,\Dcal)^\top$ of subcategories of $\Mod R$ such that $\Dcal = \Ccal^\top$ and $\Ccal = {}^\top \Dcal$. In this case, $\Ccal$ is closed under direct limits and so if $\Ccal$ satisfies $\GL$, then we have a description of $\Ccal$ as precisely the direct limits of modules from $\Ccal^{<\omega}$. Any hereditary Tor-pair gives rise to a hereditary cotorsion pair $(\Ccal,\Ecal)$ cogenerated by character duals of objects in $\Dcal$, see \cite[Lemma 2.16(b), Lemma 5.17]{GT12}. By \cite[Theorem 2.3]{hugel2004direct}, $\Ccal$ satisfies $\GL$ if and only if $\Dcal = (\Ccal^{<\omega})^\top$, that is, the hereditary Tor-pair $(\Ccal,\Dcal)^\top$ is generated by finitely generated modules.

\subsection{Approximations}
Suppose $\Ccal$ is an isomorphism-closed collection of $R$-modules. A \textit{$\Ccal$-preenvelope} of an $R$-module $M$ is a homomorphism $\eta \colon M \to C$ with $C \in \Ccal$ such that $\Hom_R(\eta,C') \colon\Hom_R(C,C') \to \Hom_R(M, C')$ is a surjection.

We say that an isomorphism-closed collection of (finitely generated) $R$-modules $\Ccal$ is \textit{preenveloping} in $\Mod R$ ($\mod R$) if every (finitely generated) $R$-module has a $\Ccal$-preenvelope. In general, there is not always a correspondence between $\Ccal$ being preenveloping in $\Mod R$ and $\Ccal \cap \mod R$ being preenveloping in $\mod R$. However, if $\Ccal \subseteq \mod R$, then $\Ccal$ is preenveloping in $\mod R$ if and only if $\varinjlim \Ccal\subseteq \Mod R$ is preenveloping in $\Mod R$ if and only if $\varinjlim \Ccal$ is closed under products in $\Mod R$, \cite[4.2]{boev} \cite[3.11]{Kra01}. In particular, as mentioned above, $\varinjlim \Ccal$ satisfies $\GL$, \cite{Lenzing83}. 

Precovers are defined dually, however we will not refer to them here. 

\subsection{Homological dimensions} Given $n \geq 0$, we let 
$$\Pcal_n(R) = \{M \in \Mod R \mid \pd_R(M) \leq n\},$$ 
$$\Fcal_n(R) = \{M \in \Mod R \mid \fd_R(M) \leq n\}\text{, and}$$
$$\Ical_n(R) = \{M \in \Mod R \mid \id_R(M) \leq n\}$$ 
denote the subcategories of $\Mod R$ consisting of all modules whose projective, flat, or injective dimension, respectively, is at most $n$. If the ring $R$ is clear from context, we will write simply $\Pcal_n$, $\Fcal_n$, and $\Ical_n$. The same convention will be used for the further subcategories determined by dimensions we consider as well. Furthermore, we will write 
$$\Pcal_{<\infty} = \bigcup_{n \geq 0}\Pcal_n,$$
$$\Fcal_{<\infty} = \bigcup_{n \geq 0}\Fcal_n\text{, and}$$
$$\Ical_{<\infty} = \bigcup_{n \geq 0}\Ical_n$$
for the subcategories of all modules of finite projective, flat, or injective dimension, respectively. 

The subcategory $\Pcal_0$ of all projective $R$-modules is well known to satisfy $\FD$, as every projective module is a direct summand in a free one. The finite type of $\Pcal_1$ is was studied extensively in \cite{BH09} for various rings. For commutative noetherian rings, it was shown recently in \cite{hrbek2023finite} that $\Pcal_n$ satisfies $\FD$ if and only if $R$ satisfies Serre's condition $(S_n)$. In particular, $\Pcal_n$ satisfies $\FD$ for all $n \geq 0$ if and only if $R$ is Cohen--Macaulay.

The subcategory $\Fcal_0$ of all flat $R$-modules is well known to satisfy $\GL$ by the classical theorem proved independently by Govorov and Lazard. The validity of $\GL$ for $\Fcal_n$ was also studied in \cite{hrbek2023finite}. It turns out that $\Fcal_n$ satisfies $\GL$ if and only if $R$ satisfies ``almost'' Serre's condition $(C_{n+1})$. In particular, $\Fcal_n$ satisfies $\GL$ for all $n \geq 0$ if and only if $R$ is \textit{almost Cohen--Macaulay}, a generalization of Cohen--Macaulay rings defined in \cite{CFF02} as rings $R$ such that 
$$\depth(R_\p) \geq \dim(R_\p) - 1 \text{ for all } \p \in \spec R.$$

We remark the following corollary of independent interest.

\begin{cor}
The following are equivalent for a commutative noetherian ring $R$:
\begin{enumerate}
    \item[(i)] $R$ is almost Cohen--Macaulay,
    \item[(ii)] For any $k \geq 0$, $\Pcal_k$ satisfies $\LF$.
\end{enumerate}
\end{cor}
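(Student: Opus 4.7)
My plan is to derive both directions from the characterization ``$R$ almost Cohen-Macaulay $\Leftrightarrow$ $\Fcal_n$ satisfies $\GL$ for all $n$'' recalled just above from \cite{hrbek2023finite}, together with Lemma \ref{lift-GL} and, for the nontrivial direction, a direct construction producing a failure of $\LF$ when $R$ is not almost Cohen-Macaulay. The key preliminary observation is that over a noetherian ring, and over any of its localizations $R_\p$, finitely generated modules with finite flat dimension are exactly those with finite projective dimension, and the two dimensions agree. Hence $\Pcal_n^{<\omega} = \Fcal_n^{<\omega}$ and $\Pcal_n \cap \mod R_\p = \Fcal_n \cap \mod R_\p$, so that the $\LF$ property for $\Pcal_n$ and for $\Fcal_n$ literally coincide.

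For (i) $\Rightarrow$ (ii), if $R$ is almost Cohen-Macaulay then $\Fcal_n$ satisfies $\GL$ for every $n$ by the cited result, and Lemma \ref{lift-GL} gives $\LF$ for $\Fcal_n$, hence $\LF$ for $\Pcal_n$ by the identification above.

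For (ii) $\Rightarrow$ (i), I would argue by contradiction. Suppose some prime $\q$ fails $\depth R_\q \geq \dim R_\q - 1$. The $\LF$ hypothesis descends to $R_\q$: a lift $N \in \Pcal_k^{<\omega}(R)$ with $M \mid N_\p$ for $\p \subseteq \q$ yields $N_\q \in \Pcal_k^{<\omega}(R_\q)$ with $M \mid (N_\q)_{\p R_\q}$. So we may replace $R$ by $R_\q$ and assume $(R,\m)$ is local with $e := \depth R \leq d - 2$ where $d := \dim R$. I would then construct a prime $\p \subsetneq \m$ with $\depth R_\p \geq e+1$ as follows. Pick a maximal regular sequence $x_1,\ldots,x_e$ in $\m$; the quotient $\bar R := R/(x_1,\ldots,x_e)$ is local of depth zero and dimension $d-e \geq 2$. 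A minimal prime of $\bar R$ with quotient of dimension $d-e \geq 2$ produces a noetherian local domain of dimension $\geq 2$, which has infinitely many height-one primes (otherwise prime avoidance would force the maximal ideal to equal one of them, contradicting dimension). Pulling back gives infinitely many primes of $\bar R$ strictly between that minimal prime and $\bar\m$, and since $\ass \bar R$ is finite, we can choose $\bar\p \subsetneq \bar\m$ with $\bar\p \notin \ass \bar R$, whence $\depth \bar R_{\bar\p} \geq 1$. Lifting $\bar\p$ to $\p$ in $R$ containing $(x_1,\ldots,x_e)$, the sequence $x_1,\ldots,x_e$ remains regular in $R_\p$, and $R_\p/(x_1,\ldots,x_e) = \bar R_{\bar\p}$ has depth $\geq 1$, so $\depth R_\p \geq e+1$.

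To derive the contradiction, pick a regular sequence $y_1,\ldots,y_{e+1}$ in $\p R_\p$ and set $M := R_\p/(y_1,\ldots,y_{e+1})$. The Koszul complex is a minimal free resolution, so $\pd_{R_\p} M = e+1$ and $M \in \Pcal_{e+1}(R_\p) \cap \mod R_\p$. By $\LF$ for $\Pcal_{e+1}$, there is $N \in \Pcal_{e+1}^{<\omega}(R)$ with $M$ a direct summand of $N_\p$. But Auslander-Buchsbaum applied to the local ring $R$ of depth $e$ forces $\pd_R N \leq e$, hence $\pd_{R_\p} N_\p \leq e$ and thus $\pd_{R_\p} M \leq e < e+1$, the desired contradiction. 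The most delicate step is ensuring $\bar R$ admits a non-maximal prime outside its associated primes, which is where the prime-avoidance/dimension argument above enters; the rest is a clean combination of Auslander-Buchsbaum and the Koszul test module.
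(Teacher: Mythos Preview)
Your proof follows the paper's closely. Both derive $(i) \Rightarrow (ii)$ from $\GL$ for $\Fcal_k$ via \cref{lift-GL} together with $\Pcal_k^{<\omega} = \Fcal_k^{<\omega}$, and both prove $(ii) \Rightarrow (i)$ by reducing to a local ring, locating a nonmaximal prime $\p$ with $\depth R_\p > \depth R$, and deriving a contradiction from Auslander--Buchsbaum. The only substantive difference is in producing $\p$: the paper invokes a result of Bass \cite{Bas62} to obtain $\p$ with $\depth R_\p = \dim R - 1$, while you construct $\p$ by hand (maximal regular sequence plus a prime-avoidance count of height-one primes) achieving the weaker but still sufficient bound $\depth R_\p \geq \depth R + 1$. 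Your route is more self-contained; the paper's is shorter.

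One small inaccuracy worth flagging: the equality $\Pcal_n(R) \cap \mod R_\p = \Fcal_n(R) \cap \mod R_\p$ you assert is false in general---take $R = \mathbb{Z}$, $\p = (0)$, $M = \mathbb{Q}$, which lies in $\Fcal_0(\mathbb{Z}) \cap \mod\mathbb{Q}$ but not in $\Pcal_0(\mathbb{Z})$. Only the inclusion $\subseteq$ holds, since objects of $\mod R_\p$ are typically not finitely generated over $R$, so the passage from $\fd = \pd$ for finitely generated modules does not apply. This matters when you invoke $\LF$ for $\Pcal_{e+1}$ on your Koszul module $M = R_\p/(y_1,\ldots,y_{e+1})$: strictly speaking you only know $M \in \Fcal_{e+1}(R) \cap \mod R_\p$. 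The paper's own argument glosses over the identical point when it applies $\LF$ to its $R_\p$-module $M$, so this is a shared imprecision in how $\LF$ is being read rather than a flaw unique to your write-up.
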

\begin{proof}
    This implication $(i) \implies (ii)$ follows by applying \cref{lift-GL} to $\Ccal = \Fcal_k$. Indeed, $\Fcal_k$ satisfies $\GL$ by the above reference, and we have $\Pcal_k^{<\omega}(R_\p) = \Fcal_k^{<\omega}(R_\p) \subseteq \Fcal_k(R)$.

    For the converse, assume that $\m$ is a maximal ideal such that $R_\m$ is not almost Cohen--Macaulay. If $(ii)$ holds for $R$ then it clearly also holds for $R_\m$, so it suffices to show that $(ii)$ fails for a local ring $R$ which is not almost Cohen--Macaulay. By the definition, $\depth(R)<\dim(R)-1$. On the other hand, \cite[Proposition 5.1, Corollary 5.3]{Bas62}, there is $\p \in \spec R$ with $\depth(R_\p) = \dim(R) - 1$. By the Auslander-Buchsbaum formula \cite{AB56}, there is a finitely generated $R_\p$-module $M$ such that $\pd_{R_\p}(M)=\depth(R_\p)$. Towards contradiction, let $N \in \Pcal_{\depth(R_\p)}^{<\omega}(R)$ such that $M$ is a direct summand of $N_\p$. Then $\pd_R(N) \leq \depth(R) < \depth(R_\p)$, resulting in $\pd_{R_\p}(M)<\depth(R_\p)$, a contradiction.
\end{proof}

\subsection{Gorenstein projective and flat modules}
For $n \geq 0$ we let 
$$\Gcal\Pcal_n(R) = \{M \in \Mod R \mid \Gpd_R(M) \leq n\}\text{ and}$$ 
$$\Gcal\Fcal_n(R) = \{M \in \Mod R \mid \Gfd_R(M) \leq n\}$$
denote the subcategories of $\Mod R$ consisting of modules of Gorenstein projective and Gorenstein flat dimension at most $n$. That $\GL$ holds for $\Gcal\Fcal_0$ for (finite-dimensional) Gorenstein rings is well-known \cite[Theorem 10.3.8]{EJ11}, however this can fail to hold outside of Gorenstein rings \cite{HJ11}. In fact, there are local artinian such examples \cite[Remark 2.9]{HJ11}, and for these also $\Gcal\Pcal_0$ fails $\FD$, as we now show.

\begin{lem}\label{GP0fail}
    Let $R$ be of Krull dimension $d<\infty$. If $\Gcal\Fcal_d$ fails $\GL$ then $\Gcal\Pcal_d$ fails $\FD$.
\end{lem}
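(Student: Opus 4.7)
The plan is to prove the contrapositive: assuming $\Gcal\Pcal_d$ satisfies $\FD$, I will show $\Gcal\Fcal_d$ satisfies $\GL$. The central object is the class $\mathcal{L} \coloneq \varinjlim \Gcal\Pcal_d^{<\omega}$. By \cite[Theorem 2.3]{hugel2004direct}, $\mathcal{L}$ is the left-hand class of a hereditary Tor-pair, hence closed under direct summands, direct limits, extensions, and (by transfinite induction) transfinite extensions. Since $\Gcal\Pcal_d^{<\omega} \subseteq \Gcal\Fcal_d$ (because $\Gfd \le \Gpd$) and $\Gcal\Fcal_d$ is closed under direct limits by a theorem of Holm, the inclusion $\mathcal{L} \subseteq \Gcal\Fcal_d$ is immediate.

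The $\FD$ hypothesis expresses each $M \in \Gcal\Pcal_d$ as a direct summand of a transfinite extension of modules from $\Gcal\Pcal_d^{<\omega}$; the closure properties of $\mathcal{L}$ therefore force $\Gcal\Pcal_d \subseteq \mathcal{L}$. To promote this to $\Gcal\Fcal_d \subseteq \mathcal{L}$, I invoke the finite Krull dimension hypothesis $d = \dim R < \infty$ in order to establish the identity $\Gcal\Fcal_d = \Gcal\Pcal_d$. One inclusion is automatic from $\Gfd \le \Gpd$; for the reverse, any $M$ with $\Gfd_R(M) \le d$ satisfies $\Gpd_R(M) = \Gfd_R(M) \le d$ by the agreement of Gorenstein projective and Gorenstein flat dimensions for every module over a commutative noetherian ring of finite Krull dimension (as in work of Bennis--Mahdou and Christensen--Frankild--Iyengar). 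The same reasoning applied to finitely generated modules yields $\Gcal\Pcal_d^{<\omega} = \Gcal\Fcal_d^{<\omega}$.

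Chaining the inclusions, $\Gcal\Fcal_d = \Gcal\Pcal_d \subseteq \mathcal{L} \subseteq \Gcal\Fcal_d$, so $\Gcal\Fcal_d = \mathcal{L} = \varinjlim \Gcal\Fcal_d^{<\omega}$, which is precisely the $\GL$ property. The main obstacle is the dimension-theoretic identification $\Gcal\Fcal_d = \Gcal\Pcal_d$ under the sole hypothesis $\dim R = d < \infty$: while the equality $\Gpd = \Gfd$ in the commutative noetherian finite-dimensional setting is established in the literature, one must pin down the correct reference and verify that no hidden hypotheses (such as the existence of a dualizing complex) are tacitly in play.
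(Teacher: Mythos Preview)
Your proof is correct and follows essentially the same route as the paper's: both hinge on the identity $\Gcal\Fcal_d = \Gcal\Pcal_d$ under $\dim R = d < \infty$ together with the fact (from \cite[Theorem 2.3]{hugel2004direct}) that direct summands of transfinite extensions of $\Gcal\Pcal_d^{<\omega}$ lie in $\varinjlim \Gcal\Pcal_d^{<\omega}$. The paper resolves your concern about the reference for $\Gcal\Fcal_d = \Gcal\Pcal_d$ by citing \cite[Proposition 3.1 and Theorem 3.4]{Esmkhani2007}, which requires no dualizing complex; the paper also sketches an alternative argument via the Hill Lemma, but this is just a second path to the same containment you obtain from the Tor-pair structure of $\varinjlim \Gcal\Pcal_d^{<\omega}$.
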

\begin{proof}
    First, $R$ being of Krull dimension $d$ implies $\Gcal\Fcal_d = \Gcal\Pcal_d$, see \cite[Proposition 3.1 and Theorem 3.4]{Esmkhani2007}. Let $P \in \Gcal\Pcal_d$ be such that $P$ is not in $\varinjlim \Gcal\Pcal_d^{<\omega}$. 
    By the proof of \cite[Theorem 2.3]{hugel2004direct}, the (direct summands of the) transfinite extensions of modules from $\Gcal\Pcal_d^{<\omega}$ are contained in $\varinjlim \Gcal\Pcal_d^{<\omega}$, which concludes the proof as $\varinjlim \Gcal\Pcal_d^{<\omega} \subseteq \Gcal\Fcal_d$. 
    Alternatively, we give an explicit proof here. Since $\varinjlim \Gcal\Pcal_d^{<\omega}$ is closed under direct summands as noted in \cite[Lemma 1.2]{hugel2004direct}, we can towards contradiction assume that $P$ is a transfinite extension of modules from $\Gcal\Pcal_d^{<\omega}$. By applying the Hill Lemma, see the condition (H4) of \cite[Theorem 7.10]{GT12}, we obtain that any finitely generated submodule $M$ of $P$ is included in another submodule $N$ of $P$ such that $N \in \Gcal\Pcal_d^{<\omega}$. This would immediately yield $P \in \varinjlim \Gcal\Pcal_d^{<\omega}$, a contradiction.
\end{proof}
\begin{cor}\label{GP0artinian}
    Let $R$ be local, artinian, not Gorenstein, and such that $\Gcal\Pcal_0^{<\omega} \not\subseteq \Pcal_0$. Then $\Gcal\Pcal_0$ does not satisfy $\FD$.
\end{cor}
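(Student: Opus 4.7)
The plan is to reduce the claim to \cref{GP0fail} applied with $d = 0$. This is valid because an artinian ring has Krull dimension zero, so the lemma applies and shows that it suffices to establish the failure of the Govorov--Lazard property $\GL$ for $\Gcal\Fcal_0$. Note also that over artinian $R$ we have $\Gcal\Fcal_0 = \Gcal\Pcal_0$ and their finitely generated parts coincide, so the only question is whether $\Gcal\Fcal_0 = \varinjlim \Gcal\Pcal_0^{<\omega}$.

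For the failure of $\GL$ under the stated hypotheses, we appeal to the construction of Holm--Jørgensen \cite[Remark 2.9]{HJ11}: given a local artinian non-Gorenstein ring $R$ together with a finitely generated non-projective Gorenstein projective module $M$ (which exists by the assumption $\Gcal\Pcal_0^{<\omega} \not\subseteq \Pcal_0$), their argument produces a Gorenstein flat $R$-module that cannot be written as a direct limit of finitely generated Gorenstein projective modules. This is precisely the failure of $\GL$ for $\Gcal\Fcal_0$ required by \cref{GP0fail}, which then yields that $\Gcal\Pcal_0$ fails $\FD$.

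The main substance of the argument is the Holm--Jørgensen construction: it exploits the fact that a non-projective finitely generated Gorenstein projective module over a non-Gorenstein artinian local ring forces an infinite minimal complete resolution, and an associated infinitely generated Gorenstein flat module built from this data cannot be exhausted by finitely generated Gorenstein projective subobjects. The reduction to \cref{GP0fail} is then automatic from $\dim R = 0$, so all the work sits in verifying that this construction applies whenever the stated hypotheses on $R$ are met.
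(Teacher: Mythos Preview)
Your proposal is correct and follows essentially the same approach as the paper's proof, which simply reads ``Use \cref{GP0fail} and \cite[Remark 2.9]{HJ11}.'' You have merely unpacked what these two references say: that artinian rings have Krull dimension zero so \cref{GP0fail} applies with $d=0$, and that the hypotheses on $R$ are exactly what \cite[Remark 2.9]{HJ11} needs to exhibit a Gorenstein flat module witnessing the failure of $\GL$.
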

\begin{proof}
    Use \cref{GP0fail} and \cite[Remark 2.9]{HJ11}.
\end{proof}

\begin{quest}
    If $\Gcal\Pcal_n$ satifies $\FD$, does $\Gcal\Fcal_n$ necessarily satisfy $\GL$? \cref{GP0fail} and \cite[Proposition 3.1 and Theorem 3.4]{Esmkhani2007} show this is the case for $n \geq d$ when $R$ is of Krull dimension $d$. A positive answer would mirror the analogous result for the classes $\Pcal_n$ and $\Fcal_n$, see \cite{hrbek2023finite}.
\end{quest}

\section{Restricted homological dimensions}\label{S:restricted}
Following \cite{CFF02}, the \textit{(large) restricted projective dimension} and the \textit{(large) restricted flat dimension} of an $R$-module $M$ are defined as follows, in terms of Ext (resp., Tor) vanishing restricted to modules which are of finite injective (resp., flat) dimension:
$$\Rpd_R(M) = \sup \{n \geq 0 \mid \Ext_R^n(M,I) \neq 0 ~\exists I \in \Ical_{<\infty}\}\text{ and }$$
$$\Rfd_R(M) = \sup \{n \geq 0 \mid \Tor_n^R(M,F) \neq 0 ~\exists F \in \Fcal_{<\infty}\}.$$
Setting a notation similar to above, for any $n \geq 0$ we let
$$\Rcal\Pcal_n(R) = \{M \in \Mod R \mid \Rpd_R(M) \leq n\}\text{ and }$$ 
$$\Rcal\Fcal_n(R) = \{M \in \Mod R \mid \Rfd_R(M) \leq n\}$$
denote the subcategories of $\Mod R$ consisting of modules of restricted projective and restricted flat dimension at most $n$. The restricted projective and flat dimension generalize the Gorenstein projective and flat dimensions over Gorenstein rings in the following sense: If $R$ is a Gorenstein ring then $\Rpd_R = \Gpd_R$ and $\Rfd_R = \Gfd_R$. This is standard if $\dim(R)<\infty$, we clarify the infinite dimensional case in \cref{gpd-rpd}. The modules of large restricted flat dimension were also studied by Xu under the name of strongly torsion free modules, see \cite[Definition 5.4.2]{xu}.

We remark some further basic properties of these dimensions and subcategories.

\begin{prop}\label{rpdrfd-basics}
For any $n \geq 0$, the following hold:
    \begin{enumerate}
        \item $\Pcal_n \subseteq \Rcal\Pcal_n$ and $\Fcal_n \subseteq \Rcal\Fcal_n$,
        \item $\Rcal\Pcal_n \subseteq \Rcal\Fcal_n$ and $\Rcal\Pcal_n^{<\omega} = \Rcal\Fcal_n^{<\omega}$,
        \item $\Rcal\Pcal_n$ fits as a left-hand class in a hereditary cotorsion pair and $\Rcal\Fcal_n$ fits in a hereditary Tor-pair,
        \item $\Rpd_R M < \infty$ for any $M \in \mod R$.
        \item $\Rfd_R(M) = \sup\{\Rfd_{R_\m}(M_\m) \mid \m \in \mspec\} = \sup\{\Rfd_{R_\p}(M_\p) \mid \p \in \spec{R}\}$.
    \end{enumerate}
\end{prop}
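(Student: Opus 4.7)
I dispatch the five items in order: (1), (2), (5) follow from routine manipulations of the defining $\Ext$/$\Tor$ vanishing, while (3) and (4) appeal to cotorsion-pair machinery and depth formulas from \cite{CFF02}. For (1): if $\pd_R M \le n$ (resp.\ $\fd_R M \le n$), then $\Ext^k_R(M,-)$ (resp.\ $\Tor^R_k(M,-)$) vanishes for $k > n$, in particular on $\Ical_{<\infty}$ (resp.\ $\Fcal_{<\infty}$). For (2), the inclusion $\Rcal\Pcal_n \subseteq \Rcal\Fcal_n$ uses Pontryagin duality: for $F \in \Fcal_{<\infty}$ one has $F^+ := \Hom_{\ZZ}(F, \mathbb{Q}/\ZZ) \in \Ical_{<\infty}$ (since $\id_R F^+ \le \fd_R F$) together with the natural isomorphism $\Ext^k_R(M, F^+) \cong \Tor^R_k(M, F)^+$, and faithfulness of $(-)^+$ converts Ext-vanishing into Tor-vanishing. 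The equality $\Rcal\Pcal_n^{<\omega} = \Rcal\Fcal_n^{<\omega}$ is then supplied by the depth formulas of \cite{CFF02}, which give $\Rpd_R M = \Rfd_R M$ for $M \in \mod R$.

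For (5), the key observations are that $\Fcal_{<\infty}(R_\p) \subseteq \Fcal_{<\infty}(R)$ (an $R_\p$-flat resolution is also $R$-flat since $R_\p$ is $R$-flat), that $F_\p \in \Fcal_{<\infty}(R_\p)$ for any $F \in \Fcal_{<\infty}(R)$, and the two base-change isomorphisms $\Tor^R_k(M, F) \cong \Tor^{R_\p}_k(M_\p, F)$ for $F$ an $R_\p$-module, and $\Tor^R_k(M, F)_\m \cong \Tor^{R_\m}_k(M_\m, F_\m)$ in general. Combined with locality of vanishing at maximal ideals, these deliver both equalities in the statement.

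Item (3) is the structural heart. Closure of $\Ical_{<\infty}$ under injective cosyzygies (an exact sequence $0 \to I \to E \to I' \to 0$ with $E$ injective sends $\Ical_m \mapsto \Ical_{\max(0,m-1)}$) allows a dimension shift showing $\Rpd_R M \le n$ is equivalent to $\Omega^n M \in {}^\perp \Ical_{<\infty}$. Thus $\Rcal\Pcal_n$ consists of modules whose $n$-th syzygy lies in ${}^\perp \Ical_{<\infty}$, a class closed under extensions, coproducts, syzygies and summands. To invoke Eklof--Trlifaj \cite[Cor.\ 6.14]{GT12} and upgrade this to a hereditary cotorsion pair one extracts a set of generators from the filtered union $\Ical_{<\infty} = \bigcup_m \Ical_m$ via a cardinality bound (using noetherianity of $R$) to produce a set-sized $\perp$-cogenerating family. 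The Tor-pair version for $\Rcal\Fcal_n$ follows from the character-dual correspondence between cotorsion and Tor-pairs \cite[Lem.\ 5.17]{GT12}. This generating-set step is the main obstacle, since $\Ical_{<\infty}$ is not itself the right class of a cotorsion pair and the reduction to a set requires careful bookkeeping across the layers $\Ical_m$.

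Finally, (4) follows from the Chouinard-type formula $\Rpd_R M = \sup\{\depth R_\p - \depth_{R_\p} M_\p : \p \in \supp M\}$ of \cite{CFF02} for $M \in \mod R$. The supremum is attained at an associated prime of $M$ (where $\depth_{R_\p} M_\p = 0$, leaving $\depth R_\p$), so it is bounded by $\max\{\depth R_\p : \p \in \ass M\}$; since $\ass M$ is finite and each of the relevant depths is finite, this maximum is finite.
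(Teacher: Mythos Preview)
The paper dispatches (1)--(4) in a single sentence: (1) and (3) are declared clear from the definitions, (2) is cited to \cite[Lemma 5.16]{CFF02}, and (4) to \cite[Theorem 1.1]{AIL10}; item (5) gets a one-line justification via localization of $\Tor$ and of flat dimension. Your treatments of (1), (2), and (5) are correct and essentially the same as the paper's, just more explicit.

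For (3), you are working much harder than necessary, and the hard step you flag is both unfinished and not needed. By the cosyzygy dimension shift you already carried out, one has $\Rcal\Pcal_n = {}^\perp \Ecal$ for the \emph{class} $\Ecal$ of $n$-th injective cosyzygies of modules in $\Ical_{<\infty}$ (this is equivalent to your condition $\Omega^n M \in {}^\perp \Ical_{<\infty}$). Under the paper's definition of a hereditary cotorsion pair --- $\Dcal = \Ccal^\perp$ and $\Ccal = {}^\perp\Dcal$, with $\perp$ meaning vanishing of \emph{all} $\Ext^i$ --- any class of the form ${}^\perp\Ecal$ is automatically the left-hand class of such a pair, namely $({}^\perp\Ecal,({}^\perp\Ecal)^\perp)$: this is just the formal Galois connection between ${}^\perp(-)$ and $(-)^\perp$. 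No set of cogenerators, no Eklof--Trlifaj, and no cardinality bookkeeping are required. The same remark applies verbatim to $\Rcal\Fcal_n$ and the $\top$-Galois connection. Your proposed extraction of a set from $\bigcup_m \Ical_m$ would, if it worked, yield a \emph{complete} cotorsion pair --- strictly stronger than what (3) asserts --- and you do not actually carry it out.

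For (4), there is a genuine gap. You assert that the supremum $\sup\{\depth R_\p - \depth_{R_\p} M_\p : \p \in \supp M\}$ is attained at an associated prime of $M$, but you give no argument for this, and it is not obvious: for a non-associated $\p$ the quantity $\depth R_\p - \depth M_\p$ could a priori exceed every $\depth R_\q$ with $\q \in \ass M$, and ruling this out over a ring of possibly infinite Krull dimension is precisely the substance of the finiteness statement you are trying to prove. The paper sidesteps this entirely by citing \cite[Theorem 1.1]{AIL10}, where the finiteness is established directly.
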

\begin{proof}
    (1) and (3) are clear from the definition, (2) is \cite[Lemma 5.16]{CFF02}, and (4) is \cite[Theorem 1.1]{AIL10}.

    The locality condition (5) is clear due to the isomorphism $\Tor_R^i(-,-)_\p \cong \Tor_{R_\p}^i(-_{\p},-_{\p})_\p$ for any maximal ideal $\p$ and that $\fd_R(N) \geq \fd_R(N_\p) = \fd_{R_\p}(N_\p)$ holds for any $R$-module $N$.
\end{proof}
We will be chiefly interested in studying the condition $\FD$ for the subcategory $\Rcal\Pcal_n$ and $\GL$ for the subcategory $\Rcal\Fcal_\n$ for $n \geq 0$. Note that by \cref{rpdrfd-basics}, the latter condition is equivalent to $\Rcal\Fcal_n = \varinjlim \Rcal\Pcal_n^{<\omega}$.

In the generality of a commutative noetherian ring of finite Krull dimension, we establish $\FD$ for $\Rcal\Pcal_0$ and $\LF$ for $\Rcal\Fcal_0$ using a somewhat elementary argument. Let $\Lcal\Fcal_{<\infty} = \{M \in \Mod R \mid M_\p \in \Fcal_{<\infty}(R_\p) ~\forall \p \in \spec R\}$ denote the class of modules which are \textit{locally of finite flat dimension}. Note that, equivalently, $\Lcal\Fcal_{<\infty} = \{M \in \Mod R \mid M_\p \in \Fcal_{<\infty} ~\forall \p \in \spec R\}$, as $R$-flat and $R_\p$-flat dimension of $R_\p$-modules coincides. In particular, $\Lcal\Fcal_{<\infty} =\Fcal_{<\infty}$ in case $\dim(R)<\infty$.
Later we shall also need to consider the class $\Lcal\Ical_{<\infty} = \{M \in \Mod R \mid M_\p \in \Ical_{<\infty}(R_\p) ~\forall \p \in \spec R\}$ consisting of all modules which are \textit{locally of finite injective dimension}, that is, those $R$-modules $M$ such that $M_\p$ is of finite injective dimension over $R_\p$ for all primes $\p$.

\begin{lem}\label{gor}  Let $R$ be a commutative noetherian ring. Then there is a hereditary Tor-pair $(\Rcal\Fcal_0,\Lcal\Fcal_{<\infty})^\top$.

Furthermore, if $\dim(R)<\infty$ then the following also hold:
\begin{enumerate}
    \item[(i)]  $\Lcal\Fcal_{<\infty}=(\Rcal\Pcal_0^{<\omega})^{\top}$, and thus $\Rcal\Fcal_0$ satisfies $\GL$.
    \item[(ii)] There is a hereditary cotorsion pair $(\Rcal\Pcal_0,\Ical_{<\infty})$. Furthermore, $\Ical_{<\infty}=(\Rcal\Pcal_0)^{\perp}=(\Rcal\Pcal_0^{<\omega})^{\perp}$ and thus $\Rcal\Pcal_0$ satisfies $\FD$.
\end{enumerate}
\end{lem}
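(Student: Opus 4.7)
The plan is to prove the Tor-pair identity first in full generality (using locality to reduce to local noetherian rings, which are automatically finite-dimensional), and then derive the finite-dimensional consequences (i) and (ii) by the same technique applied globally. The arithmetic input is the uniform bound $\Rpd_R M \le \dim R$ on finitely generated $M$ over a noetherian $R$ of finite Krull dimension, which follows by combining \cref{rpdrfd-basics}(2) with the Chouinard-type formula for $\Rfd$ in \cite{CFF02}; since localizations of noetherian rings are local and hence finite-dimensional, this bound is always available after localizing.

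For the Tor-pair $(\Rcal\Fcal_0,\Lcal\Fcal_{<\infty})^\top$, the identity $\Rcal\Fcal_0 = {}^\top\Lcal\Fcal_{<\infty}$ follows immediately from the locality of $\Rfd$ (\cref{rpdrfd-basics}(5)) via $\Tor_i^R(-,-)_\p \cong \Tor_i^{R_\p}(-_\p,-_\p)$, combined with $\Fcal_{<\infty} \subseteq \Lcal\Fcal_{<\infty}$. The nontrivial direction is $(\Rcal\Fcal_0)^\top \subseteq \Lcal\Fcal_{<\infty}$, for which I would reduce to the local case using locality of $\Tor$ together with the observation that restriction of scalars along the flat map $R \to R_\p$ sends $\Rcal\Fcal_0(R_\p)$ into $\Rcal\Fcal_0(R)$. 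Over a local noetherian $R$ of Krull dimension $d$, the key step is a syzygy argument: for any finitely generated $M$, the $d$-th syzygy $\syz^d M$ in a finitely generated projective resolution satisfies $\syz^d M \in \Rcal\Pcal_0^{<\omega} \subseteq \Rcal\Fcal_0$ by the uniform bound, whence dimension shifting gives $\Tor_j^R(M,N) = 0$ for $j > d$ uniformly in $M$. Specialising to $M = R/I$ for ideals $I$ and invoking the criterion $\fd_R N \le n \iff \Tor_{n+1}^R(R/I, N) = 0$ then forces $\fd_R N \le d$, so $N \in \Fcal_{<\infty}(R)$.

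Under $\dim R = d < \infty$, the same syzygy argument runs globally and $\Lcal\Fcal_{<\infty} = \Fcal_{<\infty}$. For (i), $\Lcal\Fcal_{<\infty} \subseteq (\Rcal\Pcal_0^{<\omega})^\top$ is immediate since $\Rcal\Pcal_0^{<\omega} \subseteq \Rcal\Fcal_0$ by \cref{rpdrfd-basics}(2), and the reverse inclusion is the global version of the syzygy argument. This identifies the Tor-pair from the first part as generated by finitely generated modules, giving $\GL$ for $\Rcal\Fcal_0$ by \cref{ss:defn-cot-tor-pair} and \cite[Theorem 2.3]{hugel2004direct}. For (ii), $\Rcal\Pcal_0 = {}^\perp \Ical_{<\infty}$ is the definition of $\Rpd_R$, and the same syzygy argument with $\Ext$ in place of $\Tor$ establishes $(\Rcal\Pcal_0^{<\omega})^\perp \subseteq \Ical_d$ via the Baer-type criterion $\id_R N \le n \iff \Ext^{n+1}_R(R/I,N) = 0$ for all ideals $I$. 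The reverse inclusions being obvious, this identifies $(\Rcal\Pcal_0, \Ical_{<\infty})$ as a hereditary cotorsion pair with $\Ical_{<\infty} = (\Rcal\Pcal_0)^\perp = (\Rcal\Pcal_0^{<\omega})^\perp$; the Eklof--Trlifaj theorem \cite[Corollary 6.14]{GT12}, as recalled in \cref{ss:defn-cot-tor-pair}, then yields $\FD$ for $\Rcal\Pcal_0$.

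The main obstacle is the uniform bound $\Rpd_R M \le \dim R$: \cref{rpdrfd-basics}(4) alone gives only pointwise finiteness of $\Rpd_R$ on finitely generated modules, whereas the syzygy shift requires a range of degrees independent of $M$ in order to invoke the Baer-type criteria for $\fd$ and $\id$. Once the uniform bound is in place, the rest is formal manipulation with cotorsion and Tor-pair theory.
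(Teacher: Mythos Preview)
Your proposal is correct and follows essentially the same syzygy-plus-Baer argument as the paper: take $d$-th syzygies of cyclic modules, observe they lie in $\Rcal\Pcal_0^{<\omega}$, and apply the Baer/dual Baer criteria. The paper organizes it slightly differently---proving the finite-dimensional case first and then localizing to get the general Tor-pair---but the content is the same.

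One remark: the ``main obstacle'' you flag, the uniform bound $\Rpd_R M \le d$, is actually immediate and does not require the Chouinard formula. Once $\dim R = d < \infty$, one has $\Ical_{<\infty} = \Ical_d$ (since over a commutative noetherian ring any module of finite injective dimension satisfies $\id_R I \leq \dim R$), and then $\Ext_R^{>d}(M,I) = 0$ for \emph{every} $M$ and every $I \in \Ical_{<\infty}$ trivially. The same goes for $\Fcal_{<\infty} = \Fcal_d$. This is exactly how the paper opens its proof, and it makes the syzygy step entirely formal rather than the crux of the argument.
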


\begin{proof}
    Assume first $d = \dim(R)<\infty$, then $\Ical_{<\infty} = \Ical_d$ and $\Fcal_{<\infty} = \Fcal_d$, so that there is a hereditary cotorsion pair $(\Rcal\Pcal_0,\Ical_{<\infty})$ and a hereditary Tor-pair $(\Rcal\Fcal_0,\Fcal_{<\infty})^\top$. Let $\Scal$ denote the set of all $d$-th syzygies of all cyclic $R$-modules. By the Baer criterion, we have $\Scal^{\perp} = \Ical_d$, and by the dual Baer criterion, we have $\Scal^{\top} = \Fcal_d$. Since $\Rpd_R(M) \leq d$ for any $R$-module $M$, we have $\Scal \subseteq \Rcal\Pcal_0 \subseteq \Rcal\Fcal_0$, \cite[Lemma 5.16]{CFF02}. We have shown that $\Scal$ generates the cotorsion pair $(\Rcal\Pcal_0,\Ical_{<\infty})$ and the Tor-pair $(\Rcal\Fcal_0,\Fcal_{<\infty})^\top$, and as $\Scal$ consists of finitely generated modules, we are done, see \cref{ss:defn-cot-tor-pair}.

    It remains to show that there is a hereditary Tor-pair $(\Rcal\Fcal_0,\Lcal\Fcal_{<\infty})^\top$ without assuming $\dim(R) < \infty$. For $M \in \Rcal\Fcal_0$ we have $M_\p \in \Rcal\Fcal_0$ and $M_\p \in \Rcal\Fcal_0(R_\p)$, and so $\Tor_i^R(M,P)_\p \cong \Tor_i^{R_\p}(M_\p,P_\p)$ vanishes for all $i>0$ if and only if $P_\p \in \Fcal_{<\infty}(R_\p)$. It follows that $\Tor_i^R(M,P) = 0$ for all $i>0$ if and only if $P \in \Lcal\Fcal_{<\infty}$ and so $\Lcal\Fcal_{<\infty} = (\Rcal\Fcal_0)^{\top}$.
\end{proof}

\begin{rem}\label{CM-Rpd-rmk}
Let $\cm = \cm(R)$ denote the class of all finitely generated maximal Cohen--Macaulay modules. We remark that if $R$ is Cohen--Macaulay then $\cm = \Rcal\Pcal_0^{<\omega}$, this follows from \cite[Theorem 2.4(b)]{CFF02} or from \cite[Theorem 5.22]{CFF02}. As a particular consequence of \cref{gor}, for any Cohen--Macaulay ring $R$ of finite Krull dimension, the subcategory $\varinjlim \cm(R)$ fits into a hereditary cotorsion pair $(\varinjlim \cm(R),\Ical_{<\infty})$, a fact previously known only in the presence of a dualizing module, see \cite[Corollary 5.7]{Bird2020}.
\end{rem}

\begin{cor}\label{cor:rf0lift}
    If $R$ is a commutative noetherian ring of finite Krull dimension then $\Rcal\Fcal_0$ satisfies $\LF$.
\end{cor}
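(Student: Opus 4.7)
The plan is to recognize this as an essentially immediate application of the Govorov--Lazard property just established in \cref{gor}(i), combined with the general principle of \cref{lift-GL} that $\GL$ automatically implies $\LF$.

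More concretely, since $R$ has finite Krull dimension, part (i) of \cref{gor} asserts that the class $\Rcal\Fcal_0 \subseteq \Mod R$ satisfies $\GL$. The hypothesis of \cref{lift-GL} is then satisfied for the choice $\Ccal = \Rcal\Fcal_0$, and the ``in particular'' clause of that lemma (which is obtained by specialising the flat ring epimorphism $R \to S$ to the localisations $R \to R_\p$ at each $\p \in \spec R$) yields exactly the conclusion that $\Rcal\Fcal_0$ satisfies $\LF$.

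There is no real obstacle here: the only delicate point to verify is that the ``in particular'' statement of \cref{lift-GL} genuinely applies to our situation, which amounts to knowing that $R \to R_\p$ is a flat ring epimorphism (standard for localisations at prime ideals) and that a finitely generated $R_\p$-module $M$ lying in $\Rcal\Fcal_0 \cap \mod{R_\p}$ is indeed a member of $\Rcal\Fcal_0$ as an $R$-module in the sense required by \cref{lift-GL}; this is in turn transparent from the locality statement \cref{rpdrfd-basics}(5). Thus the whole proof should amount to a one-line deduction citing \cref{gor}(i) and \cref{lift-GL}.
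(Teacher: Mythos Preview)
Your proposal is correct and matches the paper's own proof, which reads simply ``Combine \cref{gor}(i) and \cref{lift-GL}.'' The extra verification you mention is not actually needed: the ``in particular'' clause of \cref{lift-GL} already delivers $\LF$ directly once $\GL$ is known, with no additional hypothesis to check.
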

\begin{proof}
    Combine \cref{gor}(i) and \cref{lift-GL}.
\end{proof}
It is natural to ask if \cref{cor:rf0lift} holds for rings of infinite Krull dimension.
\begin{quest}
Does $\Rcal\Fcal_0$ satisfy $\GL$ over commutative noetherian rings of infinite Krull dimension?
\end{quest}
We are only able to show this in what follows in the presence of a pointwise dualizing module using approximation techniques. 
\subsection{Rings with a pointwise dualizing module}\label{SS:dualizing}
For the definition and references about pointwise dualizing modules, please see \cref{S:CMrings}. For the application in the next section, it suffices to recall that if $R$ is a Gorenstein ring then $R$ itself is a pointwise dualizing module. Note that if $R$ has a pointwise dualizing module then $R$ is Cohen--Macaulay. In particular, $\cm = \Rcal\Pcal_0^{<\omega}$ by \cref{CM-Rpd-rmk} and $\cm(R_\p) = \Rcal\Fcal_0 \cap \mod R_\p$ as $\Rcal\Fcal_0(R_\p) = (\Rcal\Fcal_0)_\p$.

\begin{lem}\label{MCMloc}
    Let $R$ be a Cohen--Macaulay commutative noetherian ring with a pointwise dualizing module $\Omega$. Then $\Rcal\Fcal_0$ satisfies $\LF$.
\end{lem}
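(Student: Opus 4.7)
The goal, given $\p \in \spec R$ and $M \in \Rcal\Fcal_0 \cap \mod R_\p = \cm(R_\p)$, is to produce some $N \in \Rcal\Pcal_0^{<\omega}(R) = \cm(R)$ such that $M$ is a direct summand of $N_\p$. My plan is a classical lift--approximate--split argument. For the lifting, since $M$ is finitely presented over $R_\p$, I would pick a presentation $R_\p^a \to R_\p^b \to M \to 0$ and clear denominators in the presentation matrix by multiplying through by an appropriate element of $R \setminus \p$. The cokernel $N'$ of the resulting map $R^a \to R^b$ is then a finitely generated $R$-module satisfying $N'_\p \cong M$, since the cleared denominator becomes a unit in $R_\p$ and thus does not change the cokernel after localizing.

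For the approximation and splitting step, I would apply an Auslander--Buchweitz-style MCM approximation over $R$ to $N'$, producing a short exact sequence $0 \to Y \to N \to N' \to 0$ with $N \in \cm(R)$ finitely generated and $Y$ of finite injective dimension (at least after localizing at $\p$). Localizing this sequence at $\p$ yields $0 \to Y_\p \to N_\p \to M \to 0$ in $\mod R_\p$ with $Y_\p \in \Ical_{<\infty}(R_\p)$ and $N_\p \in \cm(R_\p)$. Since $M \in \Rcal\Pcal_0^{<\omega}(R_\p)$, the very definition of restricted projective dimension gives $\Ext^1_{R_\p}(M, Y_\p) = 0$, so the sequence splits and $M$ is a direct summand of $N_\p$ as required.

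The main obstacle I anticipate is establishing the MCM approximation over $R$ Cohen--Macaulay with only a \emph{pointwise} dualizing module $\Omega$, where $R$ may be neither local nor of finite Krull dimension. Classical Auslander--Buchweitz-type approximations are typically stated either for local rings or for rings carrying a dualizing module in the stronger global sense, so an honest generalization is needed here. Overcoming this gap should rely on tools developed further in \cref{S:CMrings}; plausibly a direct construction of the approximation of $N'$ using $\Omega \otimes_R^{\mathbf L} \RHom_R(\Omega, -)$ adapted to the pointwise setting, or a local-to-global gluing argument exploiting the fact that $\Omega_\p$ is a dualizing module for $R_\p$ at every prime $\p \in \spec R$.
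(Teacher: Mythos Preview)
Your approach is essentially identical to the paper's: lift $M$ to some $N' \in \mod R$ with $N'_\p \cong M$, take an MCM approximation $0 \to Y \to N \to N' \to 0$ over $R$, localize, and split using $M \in \Rcal\Pcal_0^{<\omega}(R_\p)$ against $Y_\p \in \Ical_{<\infty}(R_\p)$. The obstacle you flag is resolved not by tools from \cref{S:CMrings} but by an external reference: the paper invokes \cite[Theorem 1.2]{miyachi1998cohen}, which for a Cohen--Macaulay ring with a pointwise dualizing module $\Omega$ furnishes an approximation $0 \to K \to G \to N' \to 0$ with $G \in \cm(R)$ and $K$ admitting a finite resolution by finite direct sums of copies of $\Omega$, so that $K_\p \in \Ical_{<\infty}(R_\p)$ as you need.
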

\begin{proof}
    Fix $L \in \Rcal\Fcal_0 \cap \mod R_\p = \cm(R_\p)$ and let $N \in \mod R$ be such that $N_\p \cong L$. By \cite[Theorem 1.2]{miyachi1998cohen} (and its proof), there is a short exact sequence $0 \to K \to G \to N \to 0$ with $G \in \cm(R)$ and $K$ admitting a finite resolution by finite coproducts of copies of $\Omega$. Consider the localised sequence $0 \to K_\p \to G_\p \to N_\p \to 0$. Then $G_\p \in \cm(R_\p)$ (\cite[Theorem 2.13]{Bruns/Herzog:1998}) and $N_\p = L \in \cm(R_\p)$. On the other hand, $K_\p$ admits a finite resolution by the dualizing module $\Omega_\p$ over the local ring $R_\p$, and thus $K_\p \in \Ical_{<\infty}(R_\p)$. It follows that the latter short exact sequence splits, rendering $L$ a direct summand of $G_\p$, as desired.
\end{proof}

\begin{cor}\label{RF0-GL}
    Let $R$ be a Cohen--Macaulay commutative noetherian ring with a pointwise dualizing module $\Omega$. Then $\Rcal\Fcal_0$ satisfies $\GL$. 
\end{cor}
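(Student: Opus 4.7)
The plan is to deduce the corollary as an essentially immediate application of \cref{LF-to-GL} with $\Ccal = \Rcal\Fcal_0$. So I would proceed by verifying each of the four hypotheses of that lemma.

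First, I would note that by \cref{rpdrfd-basics}(3), $\Rcal\Fcal_0$ is the left-hand class of a hereditary Tor-pair, hence automatically closed under direct limits, direct summands and extensions. Next, by the local characterization $\Rfd_R(M) = \sup\{\Rfd_{R_\m}(M_\m) \mid \m \in \mspec R\}$ from \cref{rpdrfd-basics}(5), the class $\Rcal\Fcal_0$ is determined locally in the sense required. Third, by \cref{MCMloc}, $\Rcal\Fcal_0$ satisfies $\LF$.

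The remaining condition to verify is that $(\Rcal\Fcal_0)_\m$ satisfies $\GL$ in $\Mod R_\m$ for every maximal ideal $\m$. Here I would use that a local Cohen--Macaulay ring admitting a pointwise dualizing module (being local, it is in fact genuinely dualizing) has finite Krull dimension. In particular $R_\m$ has finite Krull dimension for every $\m \in \mspec R$, so \cref{gor}(i) applies to $R_\m$ and gives that $\Rcal\Fcal_0(R_\m)$ satisfies $\GL$ over $R_\m$. Combined with the identification $(\Rcal\Fcal_0)_\m = \Rcal\Fcal_0(R_\m)$ (a consequence of the locality in \cref{rpdrfd-basics}(5) applied to $R_\m$-modules, together with the fact that $R_\p$-flat dimension agrees with $R$-flat dimension for $R_\p$-modules), this yields the required localized Govorov--Lazard property.

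With all four hypotheses in place, \cref{LF-to-GL} directly delivers $\GL$ for $\Rcal\Fcal_0$. The main obstacle was the last condition; the finite Krull dimension hypothesis of \cref{gor}(i) is precisely what fails in the global statement, but restricting to stalks $R_\m$ sidesteps this, which is why reducing to the local case via \cref{LF-to-GL} is the natural strategy. The technical content is really concentrated in the preceding \cref{MCMloc}, which does the actual lifting via Miyachi's short exact sequence involving the dualizing module.
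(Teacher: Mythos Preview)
Your proposal is correct and follows exactly the approach of the paper, which simply cites \cref{LF-to-GL}, \cref{gor}, and \cref{MCMloc}; you have just spelled out the verification of the hypotheses of \cref{LF-to-GL} in more detail. The extra justifications you give (closure properties from the Tor-pair structure, locality from \cref{rpdrfd-basics}(5), finite Krull dimension of $R_\m$ via the existence of a dualizing module) are exactly what is implicit in the paper's one-line proof.
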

\begin{proof}
    Follows from \cref{LF-to-GL}, \cref{gor}, and \cref{MCMloc}.
\end{proof}

\begin{lem}\label{rpd-cotorpair}
Let $R$ be a Cohen--Macaulay commutative noetherian ring with a pointwise dualizing module $\Omega$. Then $\Lcal\Ical_{<\infty}=\cm^{\perp}$.
\end{lem}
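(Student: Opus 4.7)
The plan is to establish both inclusions via a local-to-global argument, in each case reducing at every prime $\p$ to the local ring $R_\p$ where the hereditary cotorsion pair of \cref{gor}(ii) is directly applicable. Note that $R_\p$ is automatically of finite Krull dimension and remains Cohen--Macaulay, and $\cm(R_\p) = \Rcal\Pcal_0(R_\p)^{<\omega}$ by \cref{CM-Rpd-rmk} applied to $R_\p$.

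For the inclusion $\Lcal\Ical_{<\infty} \subseteq \cm^{\perp}$, I would fix $M \in \Lcal\Ical_{<\infty}$ and $C \in \cm$ and check the vanishing of $\Ext_R^i(C,M)$ for $i>0$ one prime at a time. Since $C$ is finitely generated, $\Ext_R^i(C,M)_\p \cong \Ext_{R_\p}^i(C_\p,M_\p)$. Now $C_\p \in \cm(R_\p) = \Rcal\Pcal_0(R_\p)^{<\omega}$ and $M_\p \in \Ical_{<\infty}(R_\p)$ by hypothesis, so vanishing is immediate from the very definition of $\Rpd$ applied over $R_\p$.

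For the reverse inclusion $\cm^{\perp} \subseteq \Lcal\Ical_{<\infty}$, the main point is to upgrade global Ext-vanishing against $\cm$ into local Ext-vanishing against all of $\cm(R_\p)$. This is precisely what \cref{MCMloc} buys us: every $L \in \cm(R_\p)$ is a direct summand of $G_\p$ for some $G \in \cm$. Combined with the isomorphism $\Ext_R^i(G,M)_\p \cong \Ext_{R_\p}^i(G_\p,M_\p)$ and the closure of Ext-orthogonals under direct summands, one concludes $M_\p \in \cm(R_\p)^{\perp}$ in $\Mod R_\p$.

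Finally, I invoke \cref{gor}(ii) over the local ring $R_\p$, which yields $\Ical_{<\infty}(R_\p) = (\Rcal\Pcal_0(R_\p)^{<\omega})^{\perp} = \cm(R_\p)^{\perp}$. Hence $M_\p \in \Ical_{<\infty}(R_\p)$ for every $\p \in \spec R$, i.e.\ $M \in \Lcal\Ical_{<\infty}$. I do not anticipate a genuine obstacle here: the technical work has already been absorbed into \cref{MCMloc} and \cref{gor}(ii), and what remains is essentially a bookkeeping argument ensuring that the global class $\cm$ and the local classes $\cm(R_\p)$ produce the same Ext-orthogonal after localization. The one subtlety to keep in mind is that the cotorsion-pair statement of \cref{gor}(ii) requires finite Krull dimension, which is supplied automatically once we pass to $R_\p$.
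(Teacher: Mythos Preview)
Your proposal is correct and follows essentially the same approach as the paper's proof: both inclusions are handled by localizing $\Ext$ at each prime, invoking \cref{MCMloc} for the harder inclusion $\cm^{\perp} \subseteq \Lcal\Ical_{<\infty}$, and appealing to \cref{gor}(ii) over the local ring $R_\p$. The only cosmetic differences are that the paper cites \cite[Corollary 5.23]{CFF02} where you invoke the definition of $\Rpd$ directly, and the paper records only $\Ext^1$-vanishing (implicitly relying on syzygy-closure of $\cm(R_\p)$) whereas you track all $\Ext^i$.
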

\begin{proof}
    Note that $\Ext_R^i(G,M)_\p \cong \Ext_{R_\p}^i(G_\p,M_\p)$ for any $G \in \cm$ and $\p \in \spec R$. This yields $\Lcal\Ical_{<\infty} \subseteq \cm^{\perp}$, as if $G \in \cm$ then $G_\p \in \cm(R_\p) \subseteq {}^{\perp}\Ical_{<\infty}(R_\p)$ by \cite[Corollary 5.23]{CFF02}. 
    
    The converse inclusion follows directly by applying \cref{MCMloc}. Indeed, let $M \in \cm^{\perp}$, $\p \in \spec R$, and $L \in \cm(R_\p) = \Rcal\Pcal_0(R_\p)^{<\omega}$. Then $\Ext_{R_\p}^1(L,M_\p) \cong \Ext_R^1(G,M)_\p = 0$ for some $G \in \cm(R)$, and thus $M_\p$ is of finite injective dimension as an $R_\p$-module by \cref{gor}.
\end{proof}

\begin{rem}
    We will show that also $\Rcal\Pcal_0$ satisfies $\FD$ in the setting of a ring with pointwise dualizing module in \cref{CMproj} by applying \cref{RfdCMfd}.
\end{rem}

\section{Gorenstein rings}\label{S:Gor}
In this section, we shall prove that $\Gcal\Pcal_n$ satisfies $\FD$ and $\Gcal\Fcal_n$ satisfies $\GL$ for any $n\geq 0$ in case $R$ is a Gorenstein ring which is not necessarily of finite Krull dimension.

\begin{lem}\label{gorinfdim}
    Let $R$ be Gorenstein. Then there is a hereditary cotorsion pair of the form $(\Gcal\Pcal_0,\Lcal\Ical_{<\infty})$ generated by $\Gcal\Pcal_0^{<\omega}$. In particular, $\Gcal\Pcal_0$ satisfies $\FD$.
\end{lem}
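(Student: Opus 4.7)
The plan is to realize $(\Gcal\Pcal_0, \Lcal\Ical_{<\infty})$ as the hereditary cotorsion pair generated by $\Scal := \Gcal\Pcal_0^{<\omega}$, so that the finite deconstructibility of $\Gcal\Pcal_0$ will follow at once from the Eklof--Trlifaj theorem \cite[Corollary 6.14]{GT12}. Since each localization $R_\p$ of a Gorenstein ring is Gorenstein local and hence self-dualizing, $R$ itself serves as a pointwise dualizing module in the sense of \cref{SS:dualizing}. Combined with $\Gpd_R = \Rpd_R$ (to be established in \cref{gpd-rpd}), this gives $\Scal = \cm$, and then \cref{rpd-cotorpair} yields $\Scal^\perp = \cm^\perp = \Lcal\Ical_{<\infty}$. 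In particular, the cotorsion pair generated by $\Scal$ has right-hand class $\Lcal\Ical_{<\infty}$.

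It remains to identify the left-hand class with $\Gcal\Pcal_0$; equivalently, I need $\Gcal\Pcal_0^{\perp} = \Lcal\Ical_{<\infty}$. The inclusion $\Gcal\Pcal_0^{\perp} \subseteq \cm^{\perp} = \Lcal\Ical_{<\infty}$ is automatic from $\cm \subseteq \Gcal\Pcal_0$. For the reverse, I plan to invoke the complete hereditary cotorsion pair $(\Gcal\Pcal_0, \Gcal\Pcal_0^{\perp})$ available over any ring (by Šaroch--Šťovíček). Given $M \in \Lcal\Ical_{<\infty}$, take a special preenvelope $0 \to M \to N \to G \to 0$ with $N \in \Gcal\Pcal_0^{\perp}$ and $G \in \Gcal\Pcal_0$. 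Since $N$ then also lies in $\Lcal\Ical_{<\infty}$, and the latter class is closed under cokernels of monomorphisms (it suffices to check this locally for $\Ical_{<\infty}$), we obtain $G \in \Gcal\Pcal_0 \cap \Lcal\Ical_{<\infty}$.

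The main technical obstacle is the subclaim that $\Gcal\Pcal_0 \cap \Lcal\Ical_{<\infty} \subseteq \Pcal_0$. Localizing, $G_\p$ lies in $\Gcal\Pcal_0(R_\p) \cap \Ical_{<\infty}(R_\p)$; over the Iwanaga--Gorenstein ring $R_\p$ one has $\Ical_{<\infty}(R_\p) = \Pcal_{<\infty}(R_\p)$, and Gorenstein projective modules of finite projective dimension are classically known to be projective, so $G_\p \in \Pcal_0(R_\p)$ for every $\p$. Hence $G$ is locally projective and in particular flat over the noetherian ring $R$. The delicate step is to promote this to global projectivity, i.e.\ to show that a flat Gorenstein projective module over a (possibly infinite Krull dimensional) Gorenstein ring must be projective; one approach is to take a complete projective resolution of $G$ and use flatness to force the associated short exact sequences of syzygies to remain exact after $\Hom_R(-,R^{(I)})$, which together with $\Ext^1_R(G,R^{(I)})=0$ splits off $G$ as a direct summand of a projective.

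Once the subclaim is in place, the special preenvelope $0 \to M \to N \to G \to 0$ splits, $M$ becomes a direct summand of $N \in \Gcal\Pcal_0^{\perp}$, and we conclude $\Gcal\Pcal_0^{\perp} = \Lcal\Ical_{<\infty} = \Scal^{\perp}$. Hence $(\Gcal\Pcal_0, \Lcal\Ical_{<\infty})$ is the hereditary cotorsion pair generated by $\Scal = \Gcal\Pcal_0^{<\omega}$, and $\Gcal\Pcal_0$ satisfies $\FD$ by Eklof--Trlifaj.
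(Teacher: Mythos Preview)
Your approach is essentially the paper's, but there are two points that need correction.

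First, the forward reference to \cref{gpd-rpd} is circular: in the paper, the proof of \cref{gpd-rpd} explicitly invokes \cref{gorinfdim}. Fortunately you only need the finitely generated case $\Gcal\Pcal_0^{<\omega} = \cm$, which is classical over Gorenstein rings and is simply asserted in the paper without proof.

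Second, and more substantively, the step ``Localizing, $G_\p$ lies in $\Gcal\Pcal_0(R_\p)$'' presumes that Gorenstein projectivity is preserved under localization, which is not known in general for commutative noetherian rings. The paper sidesteps this by passing to Gorenstein \emph{flat} modules: $G \in \Gcal\Pcal_0$ implies $G$ is Gorenstein flat, Gorenstein flatness \emph{does} localize, and over the Iwanaga--Gorenstein ring $R_\p$ one has $\Ical_{<\infty}(R_\p) = \Fcal_{<\infty}(R_\p)$; thus $G_\p$ is Gorenstein flat of finite flat dimension, hence flat. With this patch your argument coincides with the paper's. For the final step (flat Gorenstein projective implies projective) the paper just cites \cite[Remark~A.10]{shaul2023acyclic}; your special-preenvelope reduction via \v{S}aroch--\v{S}\v{t}ov\'{\i}\v{c}ek is simply an explicit unwinding of the comparison lemma \cite[Lemma 1.6]{hugel2004direct} that the paper invokes.
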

\begin{proof}
    By \cref{rpd-cotorpair}, we have a hereditary cotorsion pair $(\mathcal{E},\Lcal\Ical_{<\infty})$ generated by $\cm$, our goal is to show that it coincides with the hereditary cotorsion pair $(\Gcal\Pcal_0,\mathcal{W})$. As $\cm = \Gcal\Pcal_0^{<\omega}$, we have $\mathcal{E} \subseteq \Gcal\Pcal_0$. By \cite[Lemma 1.6]{hugel2004direct}, it is enough to show that $\Gcal\Pcal_0 \cap \Lcal\Ical_{<\infty} \subseteq \mathcal{E} \subseteq \Lcal\Ical_{<\infty}$. Let $M \in \Gcal\Pcal_0 \cap \Lcal\Ical_{<\infty}$, then for any $\p \in \spec R$ the $R_\p$-module $M_\p$ is of finite flat dimension and Gorenstein flat. It follows that $M_\p$ is flat and thus $M$ is a flat $R$-module. By \cite[Remark A.10]{shaul2023acyclic} it follows that $M$ is projective and thus $M \in \mathcal{E}$, as desired.
\end{proof}

We remark the following corollary, which is well-known in case of finite Krull dimension. 

\begin{cor}\label{gpd-rpd} Let $R$ be a Gorenstein ring. Then $\Gpd_R = \Rpd_R$ and $\Gfd_R = \Rfd_R$.
\end{cor}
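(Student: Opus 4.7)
The plan is to reduce the (possibly) infinite Krull dimension case to the well-known finite-dimensional setting via localization. For every $\p \in \spec R$, the ring $R_\p$ is Gorenstein of finite Krull dimension $\dim R_\p$, so the standard finite-dimensional identities $\Rpd_{R_\p} = \Gpd_{R_\p}$ and $\Rfd_{R_\p} = \Gfd_{R_\p}$ hold on $R_\p$-modules.

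For $\Rfd_R = \Gfd_R$, both dimensions satisfy locality. The restricted flat dimension is local by \cref{rpdrfd-basics}(5), while the Gorenstein flat dimension is local over commutative noetherian Gorenstein rings by a standard argument. Taking suprema over $\p \in \spec R$ yields
\[
\Gfd_R(M) = \sup\nolimits_{\p} \Gfd_{R_\p}(M_\p) = \sup\nolimits_{\p} \Rfd_{R_\p}(M_\p) = \Rfd_R(M).
\]

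For $\Rpd_R = \Gpd_R$, the inequality $\Rpd_R \le \Gpd_R$ is immediate from \cref{gorinfdim}: if $\Gpd_R(M) \le n$, then $\syz^n M \in \Gcal\Pcal_0 = {}^{\perp} \Lcal\Ical_{<\infty}$, so in particular $\Ext^{\ge 1}(\syz^n M, I) = 0$ for every $I \in \Ical_{<\infty} \subseteq \Lcal\Ical_{<\infty}$; dimension shifting then gives $\Rpd_R(M) \le n$. For the reverse inequality, I plan to invoke the identity $\Gpd_R = \Gfd_R$ over commutative Gorenstein rings: this follows by localization, since on each $R_\p$ both sides are uniformly bounded by $\dim R_\p$ and coincide, while the global class $\Gcal\Pcal_0$ is locally determined by \cref{gorinfdim}. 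Chaining with the already-established flat equality produces
\[
\Gpd_R(M) = \Gfd_R(M) = \Rfd_R(M) \le \Rpd_R(M),
\]
which combined with the first inequality yields equality everywhere.

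The main obstacle is justifying the locality of $\Gpd_R$, or equivalently, of the class $\Gcal\Pcal_0$, in infinite Krull dimension. This is immediate on finitely generated modules, since Ext of a finitely generated module commutes with localization and $\Lcal\Ical_{<\infty}$ is local by definition, together with the characterization $\Gcal\Pcal_0 = {}^\perp \Lcal\Ical_{<\infty}$ from \cref{gorinfdim}. The general case can be bootstrapped from the cotorsion-pair machinery of \cref{gorinfdim}, since the cotorsion pair is generated by the set of finitely generated maximal Cohen--Macaulay modules $\cm$, whose behavior transfers cleanly through localization (as in the proof of \cref{MCMloc} and the equality $\Lcal\Ical_{<\infty} = \cm^{\perp}$ from \cref{rpd-cotorpair}).
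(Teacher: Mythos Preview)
Your argument for $\Gfd_R = \Rfd_R$ via locality is plausible, but the argument for $\Gpd_R \le \Rpd_R$ contains a fatal error: the identity $\Gpd_R = \Gfd_R$ that you invoke is simply false, already over one-dimensional Gorenstein rings. Take $R = \mathbb{Z}$ (or any DVR) and $M = \mathbb{Q}$. Since $\mathbb{Z}$ is regular, Gorenstein projective coincides with projective and Gorenstein flat with flat, so $\Gpd_{\mathbb{Z}}(\mathbb{Q}) = \pd_{\mathbb{Z}}(\mathbb{Q}) = 1$ while $\Gfd_{\mathbb{Z}}(\mathbb{Q}) = \fd_{\mathbb{Z}}(\mathbb{Q}) = 0$. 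Thus your chain $\Gpd_R(M) = \Gfd_R(M) = \Rfd_R(M) \le \Rpd_R(M)$ breaks at the very first equality. Your supporting claim that ``on each $R_\p$ both sides coincide'' is likewise wrong: the same example with $R_\p = \mathbb{Z}_{(p)}$ shows $\Gpd_{R_\p}(\mathbb{Q}) = 1 \neq 0 = \Gfd_{R_\p}(\mathbb{Q})$. The vague ``bootstrap'' via the cotorsion pair generated by $\cm$ does not rescue this: the right-hand class $\Lcal\Ical_{<\infty}$ is locally determined, but there is no mechanism to transfer this to the left-hand class for modules that are not finitely generated, precisely because $\Ext$ fails to commute with localization in the first variable.

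The paper's route avoids any comparison of $\Gpd$ with $\Gfd$. It first observes that $\Rcal\Pcal_0^{<\omega} = \Gcal\Pcal_0^{<\omega}$ over a Gorenstein ring, and then leverages the $\GL$ and $\FD$ properties already established: since $\Rcal\Fcal_0 = \varinjlim \Rcal\Pcal_0^{<\omega}$ by \cref{RF0-GL}, one gets $\Rcal\Fcal_0 = \varinjlim \Gcal\Pcal_0^{<\omega} \subseteq \Gcal\Fcal_0$; and since by \cref{gorinfdim} the cotorsion pair with left class $\Gcal\Pcal_0$ is generated by $\Gcal\Pcal_0^{<\omega} = \Rcal\Pcal_0^{<\omega}$, the analogous filtration argument handles $\Rcal\Pcal_0 \subseteq \Gcal\Pcal_0$. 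In other words, the paper upgrades the finitely generated equality to all modules via the approximation-theoretic results of \cref{S:restricted} and \cref{gorinfdim}, not via any locality principle for $\Gpd$.
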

\begin{proof}
    It suffices to show $\Rcal\Pcal_0 = \Gcal\Pcal_0$ and $\Rcal\Fcal_0 = \Gcal\Fcal_0$, note that the inclusions $\Rcal\Pcal_0 \supseteq \Gcal\Pcal_0$ and $\Rcal\Fcal_0 \supseteq \Gcal\Fcal_0$ hold in general. Since $R$ is Gorenstein we have $\Rcal\Pcal_0^{<\omega} = \Gcal\Pcal_0^{<\omega}$ \cite[Theorem 3.19]{holm} and \cite[Lemma 5.16]{CFF02}. By \cref{RF0-GL}, we have $\Rcal\Fcal_0 = \varinjlim \Rcal\Pcal_0^{<\omega} = \varinjlim \Gcal\Pcal_0^{<\omega} \subseteq \Gcal\Fcal_0$. Similarly, by \cref{gorinfdim} we have that any $P \in \Rcal\Pcal_0$ is a direct summand in a transfinite extension of modules from $\Rcal\Pcal_0^{<\omega} =  \Gcal\Pcal_0^{<\omega}$, and thus $M \in \Gcal\Pcal_0$.
\end{proof}

Now we are ready to prove the two promised results.

\begin{thm}\label{Gpd-deconstr} If $R$ is Gorenstein, then $\Gcal\Pcal_{n}$ satisfies $\FD$ for all $n\geq 0$.  
\end{thm}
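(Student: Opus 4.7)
The plan is to apply the Eklof--Trlifaj criterion recalled in \cref{ss:defn-cot-tor-pair}: $\Gcal\Pcal_n$ satisfies $\FD$ as soon as one verifies $(\Gcal\Pcal_n^{<\omega})^\perp \subseteq \Gcal\Pcal_n^\perp$, since the reverse inclusion is automatic. The base case $n = 0$ is precisely \cref{gorinfdim}, so throughout I fix $n \geq 1$ and $N \in (\Gcal\Pcal_n^{<\omega})^\perp$.

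The argument rests on two inclusions into $\Gcal\Pcal_n^{<\omega}$. First, $\cm = \Gcal\Pcal_0^{<\omega} \subseteq \Gcal\Pcal_n^{<\omega}$, and since $\Gcal\Pcal_0$ satisfies $\FD$ by \cref{gorinfdim}, it follows that $N \in \cm^\perp = \Gcal\Pcal_0^\perp = \Lcal\Ical_{<\infty}$. Second, $\Pcal_n^{<\omega} \subseteq \Gcal\Pcal_n^{<\omega}$, and since $R$ is Gorenstein hence Cohen--Macaulay, the class $\Pcal_n$ satisfies $\FD$ by \cite{hrbek2023finite}; consequently $N \in (\Pcal_n^{<\omega})^\perp = \Pcal_n^\perp$.

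For arbitrary $M \in \Gcal\Pcal_n$, apply the special $\Lcal\Ical_{<\infty}$-preenvelope supplied by the complete cotorsion pair $(\Gcal\Pcal_0, \Lcal\Ical_{<\infty})$ of \cref{gorinfdim}, obtaining a short exact sequence
\[
0 \to M \to W \to G \to 0
\]
with $W \in \Lcal\Ical_{<\infty}$ and $G \in \Gcal\Pcal_0$. The standard short-exact-sequence bound yields $\Gpd_R W \leq \max(\Gpd_R M, \Gpd_R G) \leq n$. Localizing at any $\p \in \spec R$, the $R_\p$-module $W_\p$ has both $\Gpd_{R_\p}(W_\p) \leq n$ and finite injective dimension over the Gorenstein local ring $R_\p$. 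Since injective and projective dimensions coincide for modules of finite dimension over a Gorenstein local ring, and Gorenstein projective dimension agrees with projective dimension on modules of finite projective dimension, $\pd_{R_\p}(W_\p) \leq n$. As projective dimension over a noetherian ring is the supremum of its local values, $W \in \Pcal_n$ globally.

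Applying $\Hom_R(-, N)$ to the above sequence produces, for every $i \geq 1$, the exact fragment
\[
\Ext_R^{i}(W, N) \to \Ext_R^{i}(M, N) \to \Ext_R^{i+1}(G, N),
\]
whose outer terms both vanish: $\Ext_R^{i}(W, N) = 0$ because $W \in \Pcal_n$ and $N \in \Pcal_n^\perp$, while $\Ext_R^{i+1}(G, N) = 0$ because $G \in \Gcal\Pcal_0$ and $N \in \Gcal\Pcal_0^\perp$. Hence $\Ext_R^{i}(M, N) = 0$ for all $i \geq 1$, i.e., $N \in \Gcal\Pcal_n^\perp$, completing the verification. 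The main delicate point to handle carefully is the upgrade of $W$ from merely locally of projective dimension at most $n$ to globally so; this step relies essentially on the Gorenstein hypothesis and on the good behaviour of projective dimension under localization over noetherian rings.
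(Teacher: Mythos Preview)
Your overall strategy mirrors the paper's: both reduce to a short exact sequence $0 \to M \to W \to G \to 0$ with $G \in \Gcal\Pcal_0$ and $W \in \Pcal_n$, then invoke $\FD$ for $\Gcal\Pcal_0$ (\cref{gorinfdim}) and for $\Pcal_n$ (\cite{hrbek2023finite}). The paper works on the left-hand side of the cotorsion pair (showing $M$ lies in the class generated by $\Gcal\Pcal_n^{<\omega}$) and obtains the sequence by citing \cite[Lemma 2.17]{func} directly; you work on the right-hand side and construct the sequence from the special preenvelope of \cref{gorinfdim}. These are dual phrasings of the same idea.

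The genuine gap is your passage to $W \in \Pcal_n$. Both steps you rely on are problematic for non-finitely generated modules. First, the assertion $\Gpd_{R_\p}(W_\p) \le n$ is not justified: Gorenstein projective dimension does not localize in general, and over a Gorenstein local ring $R_\p$ a module of finite projective dimension is Gorenstein projective iff it is projective, so your claim $\Gpd_{R_\p}(W_\p)\le n$ is \emph{equivalent} to the desired $\pd_{R_\p}(W_\p)\le n$ --- the argument is circular. Second, ``projective dimension over a noetherian ring is the supremum of its local values'' is a fact about finitely generated modules; for arbitrary modules it fails (locally free at all primes does not imply projective).

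The conclusion $W \in \Pcal_n$ is nonetheless true, and you can repair the argument without leaving the paper's toolkit. Since $\Gfd_R = \Rfd_R$ over Gorenstein rings (\cref{gpd-rpd}) and $\Rfd$ localizes (\cref{rpdrfd-basics}(5)), one gets $\Gfd_{R_\p}(W_\p)\le n$; combined with $W_\p \in \Fcal_{<\infty}(R_\p)$ (as $\Lcal\Ical_{<\infty} = \Lcal\Fcal_{<\infty}$ over Gorenstein rings) this yields $\fd_{R_\p}(W_\p)\le n$, hence $\fd_R(W)\le n$. Now the $n$-th syzygy $K$ of $W$ lies in $\Gcal\Pcal_0 \cap \Fcal_0$, and the argument used in the proof of \cref{gorinfdim} (via \cite[Remark A.10]{shaul2023acyclic}) shows $K$ is projective, whence $W \in \Pcal_n$. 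Alternatively, just cite \cite[Lemma 2.17]{func} as the paper does.
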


\begin{proof} The case of $n=0$ is \cref{gorinfdim}, we extend this to the case of $n>0$. If $M\in \Gcal\Pcal_{n}$, then there is an exact sequence $0\to M\to P\to N\to 0$, where $N\in \Gcal\Pcal_0$ and $P\in \Pcal_n$, see \cite[Lemma 2.17]{func}. Taking the pull-back square of this and $0\to \syz N\to Q\to N\to 0$ for some $Q\in \Pcal_0$, we obtain an exact sequence $0\to \syz N\to M\oplus Q\to P\to 0$. Since $\syz N\in \Gcal\Pcal_0$, the claim follows by the $n=0$ case and \cite[Theorem A]{hrbek2023finite}. Indeed, consider the hereditary cotorsion pair $(\Ccal,\Dcal)$ generated by $\Gcal\Pcal_n^{<\omega}$. Then $\Ccal$ contains $\syz N$, as $\Gcal\Pcal_0^{<\omega} \subseteq \Ccal$. Also, $\Ccal$ contains $P$, as $\Pcal_n^{<\omega} \subseteq \Ccal$ and using \cite[Theorem A]{hrbek2023finite}. Finally, $\Ccal$ is closed under extensions and direct summands, which shows that $M \in \Ccal$, as desired.
\end{proof}

\begin{thm}\label{Gfd-deconstr}
    If $R$ is Gorenstein then $\Gcal\Fcal_{n}$ satisfies $\GL$ for all $n 
    \geq 0$.
\end{thm}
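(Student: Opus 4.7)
The plan is to handle the base case $n=0$ via the pointwise dualizing-module machinery established earlier, and to bootstrap to general $n\geq 1$ by verifying that the hereditary Tor-pair $(\Gcal\Fcal_n,(\Gcal\Fcal_n)^\top)^\top$ is already generated by its finitely generated left-hand part $\Gcal\Pcal_n^{<\omega}$. The key bridge will be a standard short exact sequence $0\to M\to L\to G\to 0$ with $L$ of finite flat dimension and $G$ Gorenstein flat.

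For $n=0$: a Gorenstein ring $R$ is a pointwise dualizing module over itself, so \cref{RF0-GL} yields that $\Rcal\Fcal_0$ satisfies $\GL$, and \cref{gpd-rpd} identifies $\Rcal\Fcal_0=\Gcal\Fcal_0$. For $n\geq 1$: by \cref{rpdrfd-basics}(3) together with \cref{gpd-rpd}, $\Gcal\Fcal_n=\Rcal\Fcal_n$ is the left-hand class of a hereditary Tor-pair $(\Gcal\Fcal_n,\Dcal)^\top$, and $(\Gcal\Fcal_n)^{<\omega}=\Gcal\Pcal_n^{<\omega}$ since $\Gfd_R$ and $\Gpd_R$ agree on finitely generated modules over noetherian rings. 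By \cref{ss:defn-cot-tor-pair} it therefore suffices to prove $\Dcal=(\Gcal\Pcal_n^{<\omega})^\top$, and only the inclusion $(\Gcal\Pcal_n^{<\omega})^\top\subseteq(\Gcal\Fcal_n)^\top$ requires work.

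Fix $D\in(\Gcal\Pcal_n^{<\omega})^\top$ and $M\in\Gcal\Fcal_n$; I claim $\Tor_i^R(M,D)=0$ for every $i\geq 1$. Two preliminary observations make this possible. First, since $R$ is Gorenstein hence Cohen--Macaulay hence almost Cohen--Macaulay, \cite[Theorem B]{hrbek2023finite} yields $\Fcal_n=\varinjlim \Pcal_n^{<\omega}$, and the inclusion $\Pcal_n^{<\omega}\subseteq\Gcal\Pcal_n^{<\omega}$ combined with the exactness of Tor with direct limits shows that $D$ kills Tor with every element of $\Fcal_n$. Second, by the $n=0$ case, $\Gcal\Fcal_0=\varinjlim\Gcal\Pcal_0^{<\omega}$, so $\Gcal\Pcal_0^{<\omega}\subseteq\Gcal\Pcal_n^{<\omega}$ similarly gives that $D$ kills Tor with every element of $\Gcal\Fcal_0$. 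Now invoke a standard approximation (a flat analogue of \cite[Lemma 2.17]{func} in the spirit of the proof of \cref{Gpd-deconstr}) to obtain a short exact sequence $0\to M\to L\to G\to 0$ with $L\in\Fcal_n$ and $G\in\Gcal\Fcal_0$. The long exact Tor sequence then gives
$$0=\Tor_{i+1}^R(G,D)\to\Tor_i^R(M,D)\to\Tor_i^R(L,D)=0$$
for every $i\geq 1$, completing the proof.

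The main obstacle I anticipate is securing the approximation sequence $0\to M\to L\to G\to 0$ with $L\in\Fcal_n$ (not merely $\Fcal_{<\infty}$), which rests on the completeness of the Gorenstein flat cotorsion pair over Gorenstein rings; should a clean reference prove elusive, the sequence can be built directly by a pullback against a flat precover of $G$, imitating the pullback step of \cref{Gpd-deconstr}.
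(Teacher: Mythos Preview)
Your proof is correct and follows essentially the same route as the paper's own argument: the base case $n=0$ is handled identically via \cref{RF0-GL} and \cref{gpd-rpd}, and the inductive step rests on the very same approximation sequence $0\to M\to L\to G\to 0$ with $L\in\Fcal_n$ and $G\in\Gcal\Fcal_0$ (the paper cites this as \cite[Lemma 2.19]{func}, which resolves the obstacle you anticipate), together with \cite[Theorem B]{hrbek2023finite} for $\Fcal_n$ and the $n=0$ case for $\Gcal\Fcal_0$. The only cosmetic difference is packaging: you argue directly with the Tor-pair and the long exact sequence to show $(\Gcal\Pcal_n^{<\omega})^\top\subseteq(\Gcal\Fcal_n)^\top$, whereas the paper invokes \cite[Corollary 2.4]{hugel2004direct} to note that $\varinjlim\Gcal\Pcal_n^{<\omega}$ is closed under kernels of epimorphisms and concludes $M\in\varinjlim\Gcal\Pcal_n^{<\omega}$ from $H,A\in\varinjlim\Gcal\Pcal_n^{<\omega}$---these two formulations are equivalent by the same reference.
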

\begin{proof}
    The case of $n=0$ follows directly from \cref{RF0-GL} and \cref{gpd-rpd}. Let $n>0$ and $M \in \Gcal\Fcal_{n}$, then by \cite[Lemma 2.19]{func}, there is an exact sequence $0 \to M \to H \to A \to 0$, where $A \in \Gcal\Fcal_{0}$ and $H \in \Fcal_{n}$. By \cite[Corollary 2.4]{hugel2004direct}, the direct limit closure of $\Gcal\Fcal_{n}^{<\omega}$ fits as a left-hand class $\Ccal$ in a hereditary cotorsion pair, and as such, is closed under kernels of epimorphisms. We know that $\Fcal_{n}$ satisfies $\GL$ by \cite[Theorem B]{hrbek2023finite}, and thus $H \in \Ccal$. Since also $A \in \Ccal$ by the $n=0$ case, we conclude that $M \in \Ccal$.
\end{proof}

For certain artinian rings, we also get the converse statements.

\begin{cor}\label{R-Gor-equiv-cond}
    Let $R$ be a local artinian ring such that $\Gcal\Pcal_0^{<\omega} \not\subseteq \Pcal_0$. Then the following are equivalent:
    \begin{enumerate}
        \item[(i)] $R$ is Gorenstein,
        \item[(ii)] $\Gcal\Pcal_0$ satisfies $\FD$,
        \item[(iii)] $\Gcal\Fcal_0$ satisfies $\GL$.
        \item[(iv)] $\Gcal\Pcal_n$ satisfies $\FD$ for any $n \geq 0$,
        \item[(v)] $\Gcal\Fcal_n$ satisfies $\GL$ for any $n \geq 0$.
    \end{enumerate}
\end{cor}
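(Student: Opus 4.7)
The plan is to establish two interlocking cycles of implications, namely $(i)\Rightarrow(iv)\Rightarrow(ii)\Rightarrow(i)$ and $(i)\Rightarrow(v)\Rightarrow(iii)\Rightarrow(i)$. The two forward implications $(i)\Rightarrow(iv)$ and $(i)\Rightarrow(v)$ are precisely the content of the preceding \cref{Gpd-deconstr} and \cref{Gfd-deconstr}, which were proved without any dimension restriction and for all $n\geq 0$; no further work is needed here. The restrictions $(iv)\Rightarrow(ii)$ and $(v)\Rightarrow(iii)$ are immediate by specialising the parameter $n$ to zero.

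The remaining two implications are the converses and will be handled by contrapositives, each exploiting the standing hypothesis that $R$ is local artinian with $\Gcal\Pcal_0^{<\omega} \not\subseteq \Pcal_0$. For $(ii)\Rightarrow(i)$, I would assume that $R$ is not Gorenstein and invoke \cref{GP0artinian}, whose conclusion is precisely that $\Gcal\Pcal_0$ fails $\FD$ under these hypotheses, contradicting $(ii)$. For $(iii)\Rightarrow(i)$, I would again assume that $R$ is not Gorenstein and appeal to \cite[Remark 2.9]{HJ11}, which furnishes the failure of $\GL$ for $\Gcal\Fcal_0$ for local artinian non-Gorenstein rings with $\Gcal\Pcal_0^{<\omega} \not\subseteq \Pcal_0$; this is in fact the very ingredient already used in the proof of \cref{GP0artinian} (together with \cref{GP0fail}). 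This contradicts $(iii)$ and closes the cycle.

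I do not anticipate any substantial obstacle, as all the necessary ingredients have already been assembled in the body of the paper and in the cited references: the forward directions are the two main theorems of the section, and the backward directions are a bookkeeping exercise using \cref{GP0artinian} and the Holm--Jørgensen examples. The corollary is essentially a packaging statement highlighting that, for the artinian rings under consideration, the assertions of \cref{ThmA} are in fact equivalent to the Gorenstein hypothesis.
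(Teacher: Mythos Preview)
Your proposal is correct and uses essentially the same ingredients as the paper's proof: \cref{Gpd-deconstr} and \cref{Gfd-deconstr} for the forward directions, \cref{GP0artinian} and \cite[Remark 2.9]{HJ11} for the converse directions, with the specialisations to $n=0$ being trivial. The only difference is organisational: the paper runs a single cycle $(i)\Rightarrow(ii)\Rightarrow(iii)\Rightarrow(i)$ (using the contrapositive of \cref{GP0fail} for the middle step) and then adds the spokes $(i)\Rightarrow(iv),(v)$ and $(iv)\Rightarrow(ii)$, $(v)\Rightarrow(iii)$, whereas you run two parallel cycles through $(iv)$ and $(v)$; both arrangements invoke the same results.
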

\begin{proof}
    $(i) \implies (ii):$ This is \cref{gor} together with \cref{gpd-rpd}.

    $(ii) \implies (iii):$ \cref{GP0artinian}.

    $(iii) \implies (i)$: \cite[Remark 2.9]{HJ11}.

    $(i) \implies (iv),(v):$ \cref{Gpd-deconstr} and \cref{Gfd-deconstr}.

    $(iv) \implies (ii)$ and $(v) \implies (iii)$ are trivial.
\end{proof}

\begin{rem} Let $A$ be an artinian non-Gorenstein local ring. Then, $R=A[[T]]/(T^2)$ is also an artinian non-Gorenstein local ring over which $R/(T)\in \Gcal\Pcal_0^{<\omega} \setminus \Pcal_0$, so  $\Gcal\Fcal_0$ fails $\GL$ by  \cref{R-Gor-equiv-cond}. This example also shows that ``countably presented'' in \cite[Theorem 10.2]{resol} cannot always be improved to ``finitely presented''. 
\end{rem}

\begin{quest}\label{smallGP=P-implies-big}
    Let $R$ be an artinian ring which is not Gorenstein and such that $\Gcal\Pcal_0^{<\omega} \subseteq \Pcal_0$ (see \cite{Tak08}, \cite{Chen17}). Does it hold that $\Gcal\Pcal_0 \subseteq \Pcal_0$?
\end{quest}

\begin{rem}
    A positive answer to \cref{smallGP=P-implies-big} would mean that the condition $\Gcal\Pcal_0^{<\omega} \not\subseteq \Pcal_0$ in \cref{R-Gor-equiv-cond} could be omitted. Explicitly, this would mean that over an artinian ring, conditions $(ii)-(v)$ of \cref{R-Gor-equiv-cond} are equivalent to the condition 
    \begin{enumerate}
        \item[(i)']$R$ is Gorenstein or $\Gcal\Pcal_0^{<\omega} \subseteq \Pcal_0$.
    \end{enumerate}This would give another characterisation of the commutative artinian rings which are virtually Gorenstein, see \cite[Theorem 5]{Beligiannis-thick-2008}.
\end{rem}

\section{Cohen--Macaulay rings with pointwise dualizing modules}\label{S:CMrings}
Now we extend the scope to rings admitting a pointwise dualizing module (which are automatically Cohen--Macaulay). As we observed in \cref{GP0fail}, the Gorenstein projectives can fail to satisfy $\FD$ even over artinian rings. Instead, we consider another kind of homological dimension introduced in \cite{HJ07}. A \textit{semidualizing} module is a finitely generated module $C$ such that the natural homothety map $R \to \Hom_R(C,C)$ is an isomorphism and $\Ext_R^i(C,C)=0$ for all $i>0$. A semidualizing module $\Omega$ is \textit{pointwise dualizing} if it belongs to $\Lcal\Ical_{<\infty}$ and \textit{dualizing} if it belongs to $\Ical_{<\infty}$.

Given an $R$-module $M$, the \textit{trivial extension} of $R$ is the commutative $R$-algebra $R \ltimes M$ whose underlying module is $R \oplus M$ and the multiplication is given by the rule $(r,m)\cdot(s,n) = (rs,rn+sm)$. Note that $R$ is a ring quotient of $R \ltimes M$ over the ideal $0 \oplus M$, and so every $R$-module is also naturally an $R \ltimes M$-module. The Cohen--Macaulay projective and flat dimensions of a module $M$ are then defined as:
$$\CMpd_R(M) = \inf \{\Gpd_{R \ltimes C}(M) \mid C \text{ a semidualizing $R$-module}\},$$
$$\CMfd_R(M) = \inf \{\Gfd_{R \ltimes C}(M) \mid C \text{ a semidualizing $R$-module}\}.$$
If $R$ has a dualizing module then for $M$ finitely generated $\CMpd_R(M) = \CMfd_R(M)$ is equal to the CM-dimension of Gerko \cite{G01}. 

As before, we denote $\Ccal\Mcal\Pcal_n = \{M \in \Mod R \mid \CMpd_R(M) \leq n\}$ and $\Ccal\Mcal\Fcal_n = \{M \in \Mod R \mid \CMfd_R(M) \leq n\}$.
\begin{lem}\label{pointwise-dualizing}
    Let $R$ be Cohen--Macaulay with a pointwise dualizing module $\Omega$. Then $R \ltimes \Omega$ is a Gorenstein ring.
\end{lem}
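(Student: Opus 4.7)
The plan is to reduce the claim to Reiten's classical theorem for local rings. First I would observe that $R \ltimes \Omega$ is commutative noetherian, since $\Omega$ is finitely generated as an $R$-module. The ideal $J = 0 \oplus \Omega$ satisfies $J^2 = 0$, hence lies in every prime of $R \ltimes \Omega$, and combined with the isomorphism $(R \ltimes \Omega)/J \cong R$ this shows $\mathfrak{p} \mapsto \mathfrak{p} \ltimes \Omega$ is a bijection between $\spec R$ and $\spec(R \ltimes \Omega)$ matching maximal ideals to maximal ideals.

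Next, for each $\mathfrak{p} \in \spec R$ a routine verification shows that localization commutes with the trivial extension, in the sense that $(R \ltimes \Omega)_{\mathfrak{p} \ltimes \Omega} \cong R_\mathfrak{p} \ltimes \Omega_\mathfrak{p}$: any element $(s,m)$ with $s \notin \mathfrak{p}$ has the explicit inverse $(s^{-1}, -s^{-2}m)$ in $R_\mathfrak{p} \ltimes \Omega_\mathfrak{p}$, so inverting such elements is the same as inverting $R \setminus \mathfrak{p}$. Since being Gorenstein is a local-at-primes property, it suffices to prove each such localization is a Gorenstein local ring.

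The pointwise dualizing assumption says precisely that $\Omega_\mathfrak{p} \in \Ical_{<\infty}(R_\mathfrak{p})$, i.e., has finite injective dimension over $R_\mathfrak{p}$. The semidualizing conditions (the homothety isomorphism $R \to \Hom_R(\Omega,\Omega)$ and the vanishing of self-$\Ext$) pass through localization since $\Omega$ is finitely generated, so $\Omega_\mathfrak{p}$ is a classical dualizing module over the Cohen--Macaulay local ring $R_\mathfrak{p}$. The final step invokes the theorem of Reiten: if $(S,\mathfrak{n})$ is Cohen--Macaulay local with dualizing module $\omega$, then $S \ltimes \omega$ is Gorenstein local. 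Applying this to each $R_\mathfrak{p} \ltimes \Omega_\mathfrak{p}$ finishes the argument. The main obstacle is simply citing Reiten's theorem with an appropriate reference; everything else is formal local-to-global reduction.
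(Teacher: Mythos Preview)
Your proposal is correct and follows essentially the same approach as the paper's proof: both reduce to the local case via the bijection $\spec R \cong \spec(R \ltimes \Omega)$ and the compatibility $(R \ltimes \Omega)_{\mathfrak{p} \ltimes \Omega} \cong R_\mathfrak{p} \ltimes \Omega_\mathfrak{p}$, then invoke the known local result (the paper cites \cite[Theorem 2.2]{J03}, which is the same Reiten--Foxby theorem you name). Your write-up simply supplies a few more routine details than the paper's terse version.
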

\begin{proof}
    This is \cite[Theorem 2.2]{J03} in the case that $R$ is local. The general case follows from the fact that $R$ being Gorenstein is a local property and a localization at $\p$ of a pointwise dualizing module over $R$ is a dualizing $R_\p$-module by definition. Moreover, the projection $\pi: R \ltimes \Omega \to R$ induces a bijection $\pi^*: \spec R \cong \spec(R \ltimes \Omega)$ under which $(R \ltimes \Omega)_{\pi^*(\p)} \cong R_\p \ltimes \Omega_\p$ for any $\p$.
\end{proof}
\begin{lem}\label{RfdCMfd}
    Let $R$ be Cohen--Macaulay with a pointwise dualizing module $\Omega$. Then $\Rfd_R = \CMfd_R = \Gfd_{R \ltimes \Omega}$ and $\Rpd_R = \CMpd_R = \Gpd_{R \ltimes \Omega}$.
\end{lem}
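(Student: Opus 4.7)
The plan is to exploit the Gorensteinness of the trivial extension (\cref{pointwise-dualizing}) so that the restricted and Gorenstein homological dimensions over $R \ltimes \Omega$ coincide via \cref{gpd-rpd}, and then to verify that restricted dimensions of an $R$-module are unchanged when one views it as an $(R \ltimes \Omega)$-module along the canonical surjection $\pi \colon R \ltimes \Omega \twoheadrightarrow R$.

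\emph{Reduction via the trivial extension.} First, \cref{pointwise-dualizing} implies that $R \ltimes \Omega$ is Gorenstein, so \cref{gpd-rpd} gives $\Gfd_{R \ltimes \Omega}(M) = \Rfd_{R \ltimes \Omega}(M)$ and $\Gpd_{R \ltimes \Omega}(M) = \Rpd_{R \ltimes \Omega}(M)$ for every $R \ltimes \Omega$-module, in particular for every $R$-module $M$. Since $\Omega$ is semidualizing, the infima defining $\CMfd_R(M)$ and $\CMpd_R(M)$ include the terms coming from the trivial extension $R \ltimes \Omega$, yielding $\CMfd_R(M) \le \Gfd_{R \ltimes \Omega}(M)$ and $\CMpd_R(M) \le \Gpd_{R \ltimes \Omega}(M)$.

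\emph{Comparison across the surjection.} The technical core is to show that $\Rfd_R(M) = \Rfd_{R \ltimes \Omega}(M)$ and $\Rpd_R(M) = \Rpd_{R \ltimes \Omega}(M)$ for any $R$-module $M$. I would argue locally: by \cref{rpdrfd-basics}(5) the restricted dimensions are computed prime-by-prime, and the proof of \cref{pointwise-dualizing} provides a bijection $\pi^* \colon \spec R \cong \spec(R \ltimes \Omega)$ with $(R \ltimes \Omega)_{\pi^*(\p)} \cong R_\p \ltimes \Omega_\p$. Over each local Cohen--Macaulay ring $R_\p$ the restricted dimensions admit depth-type formulas in the style of the AB-formula (cf.~\cite{CFF02,AIL10}), depending only on the depth of the ring and the depth (or width) of the module. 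Both invariants are preserved by the trivial extension: $R_\p$ and $R_\p \ltimes \Omega_\p$ are Cohen--Macaulay of the same Krull dimension and hence of the same depth, while any $R_\p$-regular sequence on $M_\p$ remains $(R_\p \ltimes \Omega_\p)$-regular on $M_\p$ because $\Omega$ acts as zero on the $R$-module $M$. Reading the same formula over $R_\p \ltimes \Omega_\p$ therefore yields the asserted equality.

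\emph{Closing the chain.} Combining the previous steps gives
\[
\Rfd_R(M) \;=\; \Rfd_{R \ltimes \Omega}(M) \;=\; \Gfd_{R \ltimes \Omega}(M) \;\ge\; \CMfd_R(M).
\]
For the remaining inequality $\CMfd_R(M) \ge \Rfd_R(M)$ it suffices to show $\Gfd_{R \ltimes C}(M) \ge \Rfd_R(M)$ for every semidualizing $R$-module $C$. This follows from two general facts: the inequality $\Rfd_S \le \Gfd_S$ holds over any ring $S$ (since totally acyclic complexes of flats remain exact after tensoring with any module of finite flat dimension, by dimension shifting on a flat resolution), and the local depth argument of the previous paragraph applies verbatim to $R \ltimes C$ (Cohen--Macaulay of the same dimension as $R$) to give $\Rfd_{R \ltimes C}(M) = \Rfd_R(M)$. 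Taking the infimum over $C$ closes the loop. The projective-dimension chain is handled by the same scheme, with $\Lcal\Ical_{<\infty}$ replacing $\Lcal\Fcal_{<\infty}$ throughout. The principal obstacle is the localization step: it rests on a clean AB-type depth formula for restricted dimensions and on the invariance of the relevant depth invariants when $R$ is replaced by $R \ltimes C$ for semidualizing $C$.
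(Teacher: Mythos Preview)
Your treatment of the $\Rfd$ chain follows essentially the same route as the paper: localize via \cref{rpdrfd-basics}(5), identify $\Rfd_{R_\p}$ with $\Gfd_{R_\p \ltimes \Omega_\p}$ locally, and bound $\CMfd_R$ from below by $\Rfd_R$ through $\Gfd_{R\ltimes C}\ge \Rfd_{R\ltimes C}=\Rfd_R$. The paper outsources the local comparison to \cite[Lemma 4.14]{SSY20}, while you invoke the AB-type formula $\Rfd_{S}(M)=\depth S-\depth_S M$ from \cite{CFF02}. One caution: for a not necessarily finitely generated $M$, ``depth'' here is the Koszul/Ext depth, not the maximal length of an $M$-regular sequence; your preservation argument (``any $R_\p$-regular sequence on $M_\p$ remains regular over $R_\p\ltimes \Omega_\p$'') tacitly uses the regular-sequence description. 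The conclusion still holds, but the justification should go through the Koszul complex: the extra generators $(0,c_j)$ of the maximal ideal act as zero on $M$, so tensoring in their Koszul factors only produces shifted copies and leaves the infimum of nonvanishing homology unchanged.

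The $\Rpd$ chain, however, has a genuine gap. You assert that ``the projective-dimension chain is handled by the same scheme,'' but neither ingredient survives. First, \cref{rpdrfd-basics}(5) is stated only for $\Rfd$; the analogous localization $\Rpd_R(M)=\sup_\m \Rpd_{R_\m}(M_\m)$ fails in general because $\Ext_R^i(M,I)_\p\cong\Ext_{R_\p}^i(M_\p,I_\p)$ requires $M$ finitely generated. Second, there is no AB-type formula $\Rpd_R(M)=\depth R-\depth_R M$ for arbitrary modules over a local ring; \cite{CFF02} establishes $\Rpd=\Rfd$ only on $\mod R$. The paper therefore argues entirely differently for $\Rpd$: the inequality $\Rpd_R\le\CMpd_R$ comes from \cite[Lemma 2.12]{HJ06}, while $\Rpd_{R\ltimes\Omega}\le\Rpd_R$ is obtained by a derived adjunction $\RHom_R(M,I)\cong\RHom_{R\ltimes\Omega}(M,\RHom_R(R\ltimes\Omega,I))$ together with \cite[Lemma 3.1]{HJ07}, which shows that $\RHom_R(R\ltimes\Omega,I)$ has finite injective dimension over $R\ltimes\Omega$. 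This global Hom-tensor argument is what replaces the unavailable localization step, and it is precisely the missing idea in your sketch.
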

\begin{proof}
    It is easier to prove the first claim. For any maximal ideal $\m$ of $R$ we have $\Rfd_{R_\m} = \Gfd_{R_\m \ltimes \Omega_\m}$ by \cite[Lemma 4.14]{SSY20} and \cref{pointwise-dualizing}. In view of \cref{gpd-rpd} and \cref{rpdrfd-basics}(5), we see that $\Rfd_R = \Rfd_{R \ltimes \Omega} = \Gfd_{R \ltimes \Omega}$. By the definition, we have $\CMfd_R \leq \Gfd_{R \ltimes \Omega}$, so it remains to show the inequality $\Rfd_R \leq \CMfd_R$, or equivalently $\Rfd_R \leq \Gfd_{R \ltimes C}(M)$ for any $C$ semidualizing. This follows from \cite[Lemma 4.14]{SSY20} (note that the proof of \textit{loc. cit.} does not need to assume the ring $R$ to be local).

    By \cite[Lemma 2.12]{HJ06}, we obtain that $(\Ccal\Mcal\Pcal^{<\omega})^{\perp}$ is closed under kernels of epimorphisms. As a consequence, we get that $\Ext_R^i(P,I)=0$ for any $P \in \Ccal\Mcal\Pcal_0$ and $I \in \Ical_{<\infty}$. This yields a chain of inequalities $$\Rpd_R \leq \CMpd_R \leq \Gpd_{R \ltimes \Omega} = \Rpd_{R \ltimes \Omega},$$
    where the first inequality follows by the above argument, the second one is directly from the definition of $\CMpd_R$, and the last equality is \cref{gpd-rpd}. 
    
    It remains to show $\Rpd_{R \ltimes \Omega} \leq \Rpd_R$. For that, we show that any $M \in \Rcal\Pcal_0(R \ltimes \Omega)$ belongs to $\Rcal\Pcal_0(R)$. For any $I \in \Ical_{<\infty}$ we have derived adjunction isomorphisms
    $$\RHom_R(M,I) \cong \RHom_R(M \otimes_{R \ltimes \Omega} (R \ltimes \Omega), I) \cong \RHom_{R \ltimes \Omega}(M,\RHom_R(R \ltimes \Omega,I)).$$ 
    By \cite[Lemma 3.1]{HJ07}, $\RHom_R(R \ltimes \Omega,I)$ is isomorphic to a stalk of an $R \ltimes \Omega$-module of finite injective dimension, and so the positive cohomology of the latter $\RHom$ vanishes. This shows $\Ext_R^i(M,I)=0$ for any $i>0$ and $I \in \Ical_{<\infty}$, which in turn yields $M \in \Rcal\Pcal_0(R)$.
 \end{proof}

 Recall that over local Cohen--Macaulay rings, weak balanced big CM modules are exactly those for which $\Rfd=0$, see \cite[Definition 4.3, Proposition 2.4]{Hol17} and \cite[Corollary 3.3]{CFF02}. 
In view of this, we note that the second claim of the following result extends  \cite[Theorem A]{Hol17} from $n=0$ to arbitrary $n \geq 0$ and also to non-local rings.
\begin{thm}\label{CMproj}
    Let $R$ be Cohen--Macaulay with a pointwise dualizing module. For any $n\geq 0$, $\Ccal\Mcal\Pcal_n$ satisfies $\FD$ and $\Ccal\Mcal\Fcal_n$ satisfies $\GL$.
\end{thm}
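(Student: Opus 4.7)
The plan is to pass to the Gorenstein trivial extension $\tilde{R} := R \ltimes \Omega$ (Gorenstein by \cref{pointwise-dualizing}) and transfer the Gorenstein results \cref{Gpd-deconstr} and \cref{Gfd-deconstr} via \cref{RfdCMfd}, which identifies $\Ccal\Mcal\Pcal_n = \Rcal\Pcal_n$ and $\Ccal\Mcal\Fcal_n = \Rcal\Fcal_n$ as classes of $R$-modules. The argument naturally splits into the $n = 0$ base case and a reduction of $n \geq 1$ to $n = 0$, parallel to the proofs in the Gorenstein section.

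For the base case $n = 0$: $\Rcal\Fcal_0$ satisfies $\GL$ by \cref{RF0-GL}. For $\Rcal\Pcal_0$ satisfying $\FD$, by Eklof--Trlifaj it suffices to show that the hereditary cotorsion pair generated by $\Rcal\Pcal_0^{<\omega} = \cm$ in $\Mod R$ has $\Rcal\Pcal_0$ as its left class. By \cref{rpd-cotorpair} the right class of this pair is $\Lcal\Ical_{<\infty}$; denoting the left class by $\mathcal{E}$, the inclusion $\mathcal{E} \subseteq \Rcal\Pcal_0$ is immediate from $\Ical_{<\infty} \subseteq \Lcal\Ical_{<\infty}$. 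For the reverse inclusion $\Rcal\Pcal_0 \subseteq \mathcal{E}$, I would emulate \cref{gorinfdim} by applying \cite[Lemma 1.6]{hugel2004direct}, reducing to the verification $\Rcal\Pcal_0 \cap \Lcal\Ical_{<\infty} \subseteq \mathcal{E}$. This step draws on \cref{gor}(ii) applied locally at each $\m \in \mspec R$ (permissible since $\dim R_\m < \infty$), combined with the lifting principle of \cref{MCMloc}, to realize an arbitrary element of the intersection as (a summand of) a transfinite extension of finitely generated maximal Cohen--Macaulay $R$-modules.

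For the reduction $n \geq 1 \Rightarrow n = 0$, I would follow the templates of \cref{Gpd-deconstr} and \cref{Gfd-deconstr}. In the $\FD$ direction, since $\Rcal\Pcal_0$ is a resolving subcategory of $\Mod R$ (containing projectives, closed under extensions and kernels of epimorphisms), the $\Rpd$-analog of \cite[Lemma 2.17]{func} yields, for any $M \in \Rcal\Pcal_n$, a short exact sequence $0 \to M \to P \to N \to 0$ with $P \in \Pcal_n(R)$ and $N \in \Rcal\Pcal_0(R)$. Pulling back with a surjection $Q \twoheadrightarrow N$ from a free module $Q$ produces $0 \to \syz N \to M \oplus Q \to P \to 0$ with $\syz N \in \Rcal\Pcal_0$. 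The cotorsion pair generated by $\Rcal\Pcal_n^{<\omega}$ contains $\Rcal\Pcal_0^{<\omega}$ (by the base case) and $\Pcal_n^{<\omega}$ (by $\FD$ for $\Pcal_n$ over Cohen--Macaulay rings, \cite[Theorem A]{hrbek2023finite}), whence $M \oplus Q$ and thus $M$ sit in its left class. In the $\GL$ direction, the $\Rfd$-analog of \cite[Lemma 2.19]{func} gives $0 \to M \to H \to A \to 0$ with $H \in \Fcal_n(R)$ and $A \in \Rcal\Fcal_0(R)$; by \cite[Corollary 2.4]{hugel2004direct}, $\varinjlim \Rcal\Fcal_n^{<\omega}$ is a left class of a hereditary cotorsion pair and hence closed under kernels of epimorphisms, and both $H$ (via $\GL$ for $\Fcal_n$ over Cohen--Macaulay rings, \cite[Theorem B]{hrbek2023finite}) and $A$ (by the base case) lie in this closure, so $M$ does too.

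The main obstacle I anticipate lies in the base case, specifically in verifying $\Rcal\Pcal_0 \cap \Lcal\Ical_{<\infty} \subseteq \mathcal{E}$. In the Gorenstein version \cref{gorinfdim}, this reduced cleanly to $\Gcal\Pcal_0 \cap \Lcal\Ical_{<\infty} = \Pcal_0$ via \cite[Remark A.10]{shaul2023acyclic}; the Cohen--Macaulay setting offers no such direct shortcut, and the required transfinite filtration must be assembled from local data provided by \cref{gor}(ii) and \cref{MCMloc}. A potential alternative is to establish $\Rcal\Pcal_0 = {}^{\perp}\Lcal\Ical_{<\infty}$ directly via the derived adjunction $\RHom_R(M, N) \cong \RHom_{\tilde{R}}(M, \RHom_R(\tilde{R}, N))$ for $M \in \Rcal\Pcal_0(R) \subseteq \Gcal\Pcal_0(\tilde{R})$ and $N \in \Lcal\Ical_{<\infty}(R)$, using a derived analog of \cref{gorinfdim} over $\tilde{R}$ with the injective-dimension behavior of the coinduction $\RHom_R(\tilde{R}, N)$ controlled stalkwise by \cite[Lemma 3.1]{HJ07}.
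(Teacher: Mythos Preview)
You announce the right plan in your first sentence—pass to $\tilde R = R \ltimes \Omega$ and transfer \cref{Gpd-deconstr}, \cref{Gfd-deconstr}—but then abandon it in favour of redoing everything inside $\Mod R$ via a base case plus reduction. The paper simply executes the announced plan and needs no splitting by $n$: given $M \in \Ccal\Mcal\Pcal_n(R)$, \cref{RfdCMfd} gives $M \in \Gcal\Pcal_n(\tilde R)$, so by \cref{Gpd-deconstr} the $\tilde R$-module $M$ is a summand of a transfinite extension of modules $N_\alpha \in \Gcal\Pcal_n^{<\omega}(\tilde R)$. Each $N_\alpha$ is finitely generated over $R$ (since $\tilde R$ is module-finite over $R$) and lies in $\Ccal\Mcal\Pcal_n(R)$ by the very definition of $\CMpd$; restriction of scalars along $R \to \tilde R$ preserves transfinite extensions (it is exact) and retractions, so we are done. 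The $\GL$ half is identical with direct limits in place of filtrations.

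Your detour has a genuine gap in the reduction step. The ``$\Rpd$-analog of \cite[Lemma 2.17]{func}'' you invoke would give $0 \to M \to P \to N \to 0$ with $P \in \Pcal_n(R)$ and $N \in \Rcal\Pcal_0(R)$, but the mechanism behind that lemma is that every Gorenstein projective embeds into a projective with Gorenstein projective cokernel; equivalently, the kernel $\Gcal\Pcal_0 \cap \Lcal\Ical_{<\infty}$ of the cotorsion pair is $\Pcal_0$. For $(\Rcal\Pcal_0,\Lcal\Ical_{<\infty})$ this fails: the kernel contains $\Omega$, so the abstract argument only produces $P$ with a length-$n$ resolution by modules in $\add(\Omega) \cup \Pcal_0$, not $P \in \Pcal_n(R)$. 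The same obstruction hits your $\Rfd$-analog of \cite[Lemma 2.19]{func}: a flat $\tilde R$-module is a direct limit of copies of $R \oplus \Omega$, which is not $R$-flat unless $R$ is Gorenstein, so you cannot get $H \in \Fcal_n(R)$. Your derived-adjunction alternative for the base case is sound (and essentially how \cref{RfdCMfd} already argues), but the entire base-case-plus-reduction architecture is unnecessary once you carry out the restriction-of-scalars transfer directly.
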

\begin{proof}
    Let $M \in \Ccal\Mcal\Fcal_n$. Then $\Gfd_{R \ltimes \Omega}M \leq n$ and by the \cref{Gfd-deconstr} and \cref{pointwise-dualizing}, $M$ is a direct limit of finitely generated $(R \ltimes \Omega)$-modules $M_i$ of $\Gpd_{R \ltimes \Omega} \leq n$. Viewed over $R$, we have that $M$ is a direct limit of the same direct system of finitely generated $R$-modules $M_i$ and $M_i \in \Ccal\Mcal\Fcal_n^{<\omega}$ as $\CMfd_R(M_i) = \Gpd_{R \ltimes \Omega}(M_i)$.

    Let $M \in \Ccal\Mcal\Pcal_n$, then $\Gpd_{R \ltimes \Omega}(M) \leq n$ by the above. Using \cref{Gpd-deconstr} and \cref{pointwise-dualizing} that $M$ is a summand of an $R \ltimes \Omega$-module which is filtered by finitely generated $R \ltimes \Omega$-modules of $\Gpd_{R \ltimes \Omega} \leq n$. Any such module has $\CMpd_R \leq n$ over $R$. As retractions over $R \ltimes \Omega$ are retractions also over $R$, we are done.
\end{proof}

As a consequence, we can also generalize \cite[Theorem C]{Hol17} to non-local Cohen--Macaulay rings and more. 

\begin{cor}\label{envelop1} Let $R$ be Cohen--Macaulay with a pointwise dualizing module. Then, for every $n\ge 0$, $\Ccal\Mcal\Fcal_n\cap \mod R$ is preenveloping in $\mod R$.  
\end{cor}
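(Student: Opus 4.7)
The plan is to invoke the criterion recalled in the Approximations subsection (due to Crawley-Boevey and Krause): a subcategory $\Ccal \subseteq \mod R$ is preenveloping in $\mod R$ if and only if $\varinjlim \Ccal$ is closed under direct products in $\Mod R$. Applying this with $\Ccal = \Ccal\Mcal\Fcal_n \cap \mod R$, we observe that by \cref{CMproj} the class $\Ccal\Mcal\Fcal_n$ satisfies $\GL$, so $\varinjlim(\Ccal\Mcal\Fcal_n \cap \mod R) = \Ccal\Mcal\Fcal_n$; and by \cref{RfdCMfd}, this class equals $\Rcal\Fcal_n$. Hence the proof reduces to showing that $\Rcal\Fcal_n$ is closed under arbitrary direct products in $\Mod R$.

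For the product closure, the key step is a test-class characterization: $M \in \Rcal\Fcal_n$ if and only if $\Tor_i^R(M, L) = 0$ for every $i > n$ and every finitely generated $R$-module $L$ of finite projective dimension. Since $R$ is Cohen-Macaulay, the results recalled in \cref{S:setup} from \cite{hrbek2023finite} give that $\Fcal_k$ satisfies $\GL$ for every $k \ge 0$; hence any $F \in \Fcal_{<\infty}$ is a direct limit $F = \varinjlim F_\alpha$ of finitely generated modules $F_\alpha$ of finite flat, and hence by noetherianity finite projective, dimension. Since Tor commutes with direct limits in the second variable, the defining Tor-vanishing condition for membership in $\Rcal\Fcal_n$ reduces to testing against such $F_\alpha$, confirming the claimed characterization.

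With this characterization in hand, product closure is immediate: any finitely generated $L$ of finite projective dimension admits a finite projective resolution by finitely generated projectives (by noetherianity of $R$), so $\Tor_i^R(-, L)$ commutes with arbitrary direct products in the first variable. Consequently, for any family $(M_j)_{j \in J}$ in $\Rcal\Fcal_n$ and any $i > n$, one has $\Tor_i^R(\prod_j M_j, L) = \prod_j \Tor_i^R(M_j, L) = 0$, so $\prod_j M_j \in \Rcal\Fcal_n$. Thus $\Rcal\Fcal_n$ is closed under products, completing the proof. The main subtlety is the test-class reduction in the middle step, which is handled cleanly by the Cohen-Macaulay hypothesis together with the $\GL$ property of $\Fcal_k$ from \cite{hrbek2023finite}; the rest of the argument is then a direct application of the Crawley-Boevey--Krause criterion combined with \cref{CMproj} and \cref{RfdCMfd}.
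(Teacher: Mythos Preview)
Your proof is correct and follows the same overall architecture as the paper: invoke the Crawley--Boevey criterion, use \cref{CMproj} to identify $\varinjlim(\Ccal\Mcal\Fcal_n^{<\omega})$ with $\Ccal\Mcal\Fcal_n$, use \cref{RfdCMfd} to identify this with $\Rcal\Fcal_n$, and finish by showing $\Rcal\Fcal_n$ is closed under products. The only difference is in this last step. The paper simply cites \cite[Corollary 3.3]{CFF02} (the depth formula $\Rfd_R M = \sup_\p(\depth R_\p - \depth M_\p)$) together with \cite[Theorem 3.2.26]{EJ11} (depth behaves well with respect to products), which in fact yields product closure of $\Rcal\Fcal_n$ over \emph{any} commutative noetherian ring. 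Your argument instead reduces the Tor-test to finitely generated modules of finite projective dimension via the $\GL$ property of $\Fcal_k$ from \cite{hrbek2023finite}, and then uses that Tor against such modules commutes with products. This is a perfectly valid alternative; it is more elementary in that it avoids the depth formula, but it consumes the Cohen--Macaulay hypothesis (through almost Cohen--Macaulayness) at the product-closure step, whereas the paper's citation does not.
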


\begin{proof} It follows from \Cref{RfdCMfd}, \cite[Corollary 3.3]{CFF02}
and \cite[Theorem 3.2.26]{EJ11}  that   $\Ccal\Mcal\Fcal_n$ is closed under products. Now \cite[Theorem (4.2)]{boev} and \Cref{CMproj} finishes the proof. 
\end{proof}

Note that \cref{CMproj} show together with \cref{RfdCMfd} that $\Rcal\Pcal_n$ satisfies $\FD$ and $\Rcal\Fcal_n$ satisfies $\GL$ for all $n \geq 0$ in case $R$ admits a pointwise dualizing module. Also, the case of $n=0$ holds true for any ring $R$ of finite Krull dimension by \cref{gor}. It is natural then to ask the following.
\begin{quest}
    Does $\Rcal\Pcal_n$ satisfy $\FD$ and does $\Rcal\Fcal_n$ satisfy $\GL$ for $n > 0$ over \textit{any} commutative noetherian ring $R$?
\end{quest}

In the next section, we provide more evidence by proving a positive answer to the second part of the question in the case of an almost Cohen--Macaulay ring of finite Krull dimension.

\section{Govorov--Lazard for restricted flat dimensions over almost Cohen--Macaulay rings}\label{S:ACM}
In this section, we propose an approach to establishing $\GL$ for the class $\Rcal\Fcal_n$ for any $n \geq 0$ based on the recent classification result \cite{HHLG24} for hereditary Tor-pairs cogenerated by modules of finite flat dimension. A hereditary Tor-pair $(\Ccal,\Dcal)^\top$ is cogenerated by a class $\Ecal \subseteq \Fcal_{<\infty}$ of modules of finite flat dimension if $\Ccal = {}^\top \Ecal$. We claim that this is equivalent to $\Dcal \subseteq \Lcal\Fcal_{<\infty}$. One inclusion is trivial, the other follows from the fact that ${}^\top \Ecal = {}^\top \bigcup_{\p \in \spec R}\Ecal_\p$, where $\Ecal = \{E_\p \mid E \in \Ecal\}$ and if $\Ecal \subseteq \Lcal\Fcal_{<\infty}$ we have $\Ecal_\p \subseteq \Fcal_{<\infty}(R)$. The mentioned classification can thus be formulated as follows.
\begin{thm}\label{HHLG-class}\cite[Theorem 4.17]{HHLG24}
    Let $R$ be a commutative noetherian ring. Then there is a bijection between:
    \begin{enumerate}
        \item[(i)] hereditary Tor-pairs $(\Ccal,\Dcal)^\top$ with $\Dcal \subseteq \Lcal\Fcal_{<\infty}$,
        \item[(ii)] functions $\ef: \spec R \to \ZZ_{\geq 0}$ such that $\ef(\p) \leq \depth R_\p$ for any $\p \in \spec R$.
    \end{enumerate}
    The bijections assigns to a function $\ef$ a Tor-pair $(\Ccal_\ef,\Dcal_\ef)^\top$, where 
    $$\Ccal_\ef = \{M \in \Mod R \mid \depth M_\p \geq \ef(\p) ~\forall \p \in \spec R\}.$$
\end{thm}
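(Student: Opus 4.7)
The plan is to reduce the classification to a pointwise local one, and then handle the local case using local cohomology depth combined with the Auslander--Buchsbaum--Foxby framework. The reduction exploits the constraint $\Dcal \subseteq \Lcal\Fcal_{<\infty}$: together with the localization isomorphism $\Tor_i^R(M,D)_\p \cong \Tor_i^{R_\p}(M_\p, D_\p)$ and $D_\p \in \Fcal_{<\infty}(R_\p)$, one checks that any hereditary Tor-pair as in (i) is determined by its localizations, each of which is again a Tor-pair over $R_\p$ with right class contained in $\Fcal_{<\infty}(R_\p)$. A function $\ef$ as in (ii) then arises as the collection of local invariants, and the bijection reduces to the local case.

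In the local case $(R_\p, \p R_\p)$ of depth $d$, I would establish a bijection $n \mapsto (\Ccal_n, \Ccal_n^\top)^\top$ between integers $n \in [0,d]$ and hereditary Tor-pairs with right side in $\Fcal_{<\infty}(R_\p)$. Here $\Ccal_n$ is defined by the local-cohomology depth condition $\Ccal_n = \{M \mid H^i_{\p R_\p}(M) = 0 \text{ for all } i < n\}$, which agrees with classical depth on finitely generated modules and is the right notion to accommodate possibly non-finitely-generated modules. The closure properties making $\Ccal_n$ the left class of a hereditary Tor-pair follow from commutation of $H^i_{\p R_\p}$ with direct limits and sums, the long exact sequence giving a depth lemma, and the vanishing $H^{<d}_{\p R_\p}(F) = 0$ for flat $F$ (ensuring closure under syzygies). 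That $\Ccal_n^\top \subseteq \Fcal_{<\infty}(R_\p)$ follows because the $n$-th syzygy $\syz^n(k)$ lies in $\Ccal_n$, so any $N \in \Ccal_n^\top$ satisfies $\Tor_{>n}^{R_\p}(k,N) = 0$ and hence $\fd_{R_\p}(N) \leq n$ by the standard Noetherian-local characterization of flat dimension via Tor against the residue field.

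The main obstacle I anticipate is the converse of the local bijection: showing every hereditary Tor-pair $(\Ccal, \Dcal)^\top$ with $\Dcal \subseteq \Fcal_{<\infty}(R_\p)$ arises as $(\Ccal_n, \Ccal_n^\top)^\top$ for some $n \in [0,d]$. Setting $n$ to be the infimum of local-cohomology depths over $M \in \Ccal$, the inclusion $\Ccal \subseteq \Ccal_n$ is tautological; for the reverse, one needs the Auslander--Buchsbaum--Foxby identity $\depth(M \otimes N) = \depth M + \depth N - d$ (under $\Tor_{>0}(M,N)=0$ and $N \in \Fcal_{<\infty}$) to pin down $\Dcal$ as precisely the class of finite-flat-dimensional modules Tor-orthogonal to $\Ccal_n$, so that ${}^\top \Dcal = \Ccal_n$. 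The global assembly is then routine: define $\ef(\p)$ as the infimum of local-cohomology depths of the localizations of elements of $\Ccal$, verify $\ef(\p) \leq \depth R_\p$ via $R \in \Ccal$, and use the locality reduction to conclude $\Ccal = \Ccal_\ef$, with no regularity assumption on $\ef$ needed across $\spec R$.
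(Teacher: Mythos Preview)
The paper does not prove this statement: \cref{HHLG-class} is quoted verbatim from \cite[Theorem 4.17]{HHLG24} and carries no proof in the present manuscript, so there is no ``paper's own proof'' to compare your proposal against.

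That said, your outline is broadly aligned with how such classifications are typically obtained. The localization reduction is sound because $\Tor$ commutes with localization and the condition $\Dcal \subseteq \Lcal\Fcal_{<\infty}$ is local by design. In the local step, using Foxby's local-cohomology depth and the depth formula $\depth(M \otimes_R N) = \depth M + \depth N - \depth R$ (valid when $\Tor_{>0}^R(M,N)=0$ and $\fd_R N < \infty$) is the right toolkit. One point to tighten: in the converse direction you define $n$ as the infimum of depths over $M \in \Ccal$ and want $\Ccal_n \subseteq \Ccal$, but the depth formula only gives you information about $\Dcal$ from the side of $\Ccal$; you still need an argument that the resulting $\Dcal$ is \emph{exactly} $\Ccal_n^\top$ rather than possibly larger, which amounts to producing, for each $j < n$, enough modules in $\Dcal$ to force $\depth \geq n$ on the left --- e.g.\ via Koszul-type witnesses or syzygies of the residue field. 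Your sketch gestures at this but does not supply it. Since the paper defers entirely to \cite{HHLG24}, you should consult that reference for the actual argument rather than attempt to reconstruct it here.
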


The following lemma shows for that for a finite dimensional ring, if a Tor-pair $(\Ccal,\Dcal)^\top$ is subject to the classification of \cref{HHLG-class}, then the Tor-pair generated by $\Ccal^{<\omega}$ is also subject to the same classification.

\begin{lem}\label{inducedTorpair}
    Let $R$ be a commutative noetherian ring. Let $(\Ccal,\Dcal)^\top$ be a hereditary Tor-pair with $\Dcal \subseteq \Lcal\Fcal_{<\infty}$. Then there is a hereditary Tor-pair $(\varinjlim\Ccal^{<\omega},\widetilde{\Dcal})^\top$. If $\dim(R)<\infty$ then $\widetilde{\Dcal} \subseteq \Lcal\Fcal_{<\infty}$ (=$\Fcal_{<\infty}$).
\end{lem}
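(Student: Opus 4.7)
The plan for the first assertion is to invoke the main tool already used in the proof of \cref{LF-to-GL}, namely \cite[Theorem 2.3]{hugel2004direct}, directly. Since $R$ is noetherian, every module in $\Ccal^{<\omega}$ is finitely presented, so the cited theorem produces a hereditary Tor-pair whose left-hand class is precisely $\varinjlim \Ccal^{<\omega}$, with right-hand class $\widetilde{\Dcal} = (\Ccal^{<\omega})^\top = (\varinjlim \Ccal^{<\omega})^\top$. Crucially, this part requires no assumption on the Krull dimension.

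For the second assertion, my strategy is to find a small but tractable subclass of $\Ccal^{<\omega}$ whose $(-)^\top$ is exactly $\Lcal\Fcal_{<\infty}$, and then compare. The natural candidate is $\Rcal\Pcal_0^{<\omega} = \Rcal\Fcal_0^{<\omega}$ (equality from \cref{rpdrfd-basics}(2)). First I would observe that the hypothesis $\Dcal \subseteq \Lcal\Fcal_{<\infty}$ forces $\Rcal\Fcal_0 \subseteq \Ccal$: applying ${}^\top(-)$ and using the hereditary Tor-pair $(\Rcal\Fcal_0, \Lcal\Fcal_{<\infty})^\top$ from \cref{gor}, we obtain
\[
\Ccal \;=\; {}^\top \Dcal \;\supseteq\; {}^\top \Lcal\Fcal_{<\infty} \;=\; \Rcal\Fcal_0.
\]
Intersecting with $\mod R$ gives $\Rcal\Pcal_0^{<\omega} = \Rcal\Fcal_0^{<\omega} \subseteq \Ccal^{<\omega}$. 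Reversing the arrow by taking $(-)^\top$ then yields $\widetilde{\Dcal} = (\Ccal^{<\omega})^\top \subseteq (\Rcal\Pcal_0^{<\omega})^\top$.

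To close the argument I would then invoke the finite Krull dimension hypothesis: by \cref{gor}(i), $(\Rcal\Pcal_0^{<\omega})^\top = \Lcal\Fcal_{<\infty} = \Fcal_{<\infty}$, which immediately gives $\widetilde{\Dcal} \subseteq \Lcal\Fcal_{<\infty}$. I do not foresee any real obstacle in this proof; the only non-formal ingredient is the identification $(\Rcal\Pcal_0^{<\omega})^\top = \Lcal\Fcal_{<\infty}$ in the finite Krull dimension case, and this is exactly what \cref{gor}(i) supplies via the Baer criterion applied to the $d$-th syzygies of cyclic modules. Conceptually, the argument amounts to the observation that, among all hereditary Tor-pairs subject to the classification of \cref{HHLG-class}, the one given by $\Rcal\Fcal_0$ is the smallest left-hand class, so every such $\Ccal$ contains it, and the Govorov--Lazard property for $\Rcal\Fcal_0$ under $\dim R < \infty$ propagates to $\varinjlim \Ccal^{<\omega}$.
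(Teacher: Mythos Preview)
Your proposal is correct and follows essentially the same approach as the paper. The paper cites \cite[Corollary 2.4]{hugel2004direct} rather than Theorem 2.3 for the first assertion, and for the second it phrases the comparison as $\Rcal\Fcal_0 = \varinjlim \Rcal\Pcal_0^{<\omega} \subseteq \varinjlim \Ccal^{<\omega}$ before passing to $(-)^\top$, whereas you take $(-)^\top$ directly on the finitely generated level; these are cosmetic variations of the same argument, and your version spells out the justification for $\Rcal\Pcal_0^{<\omega} \subseteq \Ccal^{<\omega}$ more explicitly than the paper does.
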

\begin{proof}
    The existence of the hereditary Tor-pair $(\varinjlim\Ccal^{<\omega},\widetilde{\Dcal})^\top$ follows essentially from \cite[Corollary 2.4]{hugel2004direct}. Assume $\dim(R)<\infty$. By \cref{gor}, we have the hereditary Tor-pair $(\Rcal\Fcal_0,\Fcal_{<\infty})^\top$ and in addition, $\Rcal\Fcal_0 = \varinjlim \Rcal\Pcal_0^{<\omega}$. Since $\Rcal\Pcal_0^{<\omega} \subseteq \Ccal^{<\omega}$, we have $\Rcal\Fcal_0 \subseteq \varinjlim \Ccal^{<\omega}$, and thus $\widetilde{\Dcal} \subseteq \Fcal_{<\infty}$.
\end{proof}
As shown in \cite[\S 6]{HHLG24}, the hereditary Tor-pair $(\Rcal\Fcal_n,\Dcal_n)^\top$ is the subject to the classification above and corresponds to the function $\ef_n(\p) = \max(0,\depth R_\p -n)$ via \cref{HHLG-class}. More generally, given a hereditary Tor-pair $(\Ccal,\Dcal)^\top$ and $i>0$, there is the induced hereditary Tor-pair $(\Ccal_{(i)},\Dcal_{(i)})^\top$ cogenerated by $i$-th syzygies of objects from $\Dcal$, see \cite[\S 3.2]{HHLG24}. If $\ef: \spec R \to \ZZ_{\geq 0}$ is a function as in \cref{HHLG-class}, then \cite[Proposition 4.14]{HHLG24} shows that $(\Ccal_\ef)_{(i)} = \Ccal_{\max(0,\ef-i)}$. Thus, in this notation, $(\Rcal\Fcal_n)_{(i)} = \Rcal\Fcal_{n+i}$.

We are ready to prove the following criterion. 

\begin{thm}\label{criterionRFn}
    Let $R$ be a commutative noetherian ring and consider the following conditions:
    \begin{enumerate}
        \item[(i)] $\Rcal\Fcal_n$ satisfies $\GL$ for any $n \geq 0$,
        \item[(ii)] $\Rcal\Fcal_n$ satisfies $\LF$ for any $n \geq 0$,   
        \item[(iii)] for every $\p \in \spec R$ there is $M \in \Rcal\Pcal_{\depth R_\p}^{<\omega}$ such that $\p \in \ass(M)$.
    \end{enumerate}
    Then $(i) \iff (ii) \implies (iii)$. If $\dim(R)<\infty$ then also $(iii) \implies (i)$.
\end{thm}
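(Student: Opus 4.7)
The plan is to prove the implications cyclically: first $(i) \Rightarrow (ii) \Rightarrow (iii)$ in general, then $(iii) \Rightarrow (i)$ under the assumption $\dim(R) < \infty$, and finally $(ii) \Rightarrow (i)$ in general via a local-to-global argument. The implication $(i) \Rightarrow (ii)$ is immediate from \cref{lift-GL} applied to $\Ccal = \Rcal\Fcal_n$ and the flat ring epimorphisms $R \to R_\p$.

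For $(ii) \Rightarrow (iii)$, fix $\p \in \spec R$ with $d = \depth R_\p$. The residue field $k(\p) = R_\p/\p R_\p$ is finitely generated over $R_\p$ and, by the Auslander--Bridger-type equality $\Rpd_{R_\p}(k(\p)) = \depth R_\p - \depth k(\p) = d$ for finitely generated modules (see \cite[Theorem 1.1]{AIL10} and related characterizations in \cite{CFF02}), lies in $\Rcal\Fcal_d^{<\omega} \cap \mod R_\p$. Applying $\LF$ for $\Rcal\Fcal_d$ yields $N \in \Rcal\Fcal_d^{<\omega} = \Rcal\Pcal_{\depth R_\p}^{<\omega}$ such that $k(\p)$ is a direct summand of $N_\p$. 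The inclusion $\p R_\p \in \ass_{R_\p}(k(\p)) \subseteq \ass_{R_\p}(N_\p)$ together with the standard bijection between $\ass_{R_\p}(N_\p)$ and $\{\q \in \ass_R(N) \mid \q \subseteq \p\}$ forces $\p \in \ass(N)$.

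For $(iii) \Rightarrow (i)$ under $\dim(R) < \infty$, I would exploit the classification \cref{HHLG-class}. By \cref{inducedTorpair}, the Tor-pair $(\varinjlim \Rcal\Fcal_n^{<\omega},\widetilde{\Dcal})^\top$ is cogenerated by a class contained in $\Fcal_{<\infty}$, and by \cref{HHLG-class} it corresponds to some function $\widetilde{\ef_n}$. The class $\Rcal\Fcal_n$ itself corresponds to $\ef_n(\p) = \max(0,\depth R_\p - n)$, as recorded in the paragraph following \cref{inducedTorpair}. The containment $\varinjlim \Rcal\Fcal_n^{<\omega} \subseteq \Rcal\Fcal_n$ yields $\widetilde{\ef_n} \geq \ef_n$; the reverse inequality is the heart of the argument. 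Given $\p$ with $d = \depth R_\p$, use $(iii)$ to fix $M \in \Rcal\Pcal_d^{<\omega}$ with $\p \in \ass(M)$, set $k = \max(0,d-n)$, and let $N = \syz^k(M)$ computed from a finitely generated free resolution of $M$ over $R$. Dimension shifting gives $\Ext_R^{i}(N,I) \cong \Ext_R^{i+k}(M,I)$ for $i \geq 1$ and any $I \in \Ical_{<\infty}$, from which $\Rpd_R(N) \leq d - k \leq n$, so $N \in \Rcal\Pcal_n^{<\omega} = \Rcal\Fcal_n^{<\omega}$. On the other hand, since $\depth_{R_\p}(M_\p) = 0$, iterating the depth lemma on the localized syzygy short exact sequences produces $\depth_{R_\p}(N_\p) = k = \ef_n(\p)$. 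This witness forces $\widetilde{\ef_n}(\p) \leq \ef_n(\p)$, and hence $\varinjlim \Rcal\Fcal_n^{<\omega} = \Rcal\Fcal_n$.

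Finally, $(ii) \Rightarrow (i)$ follows from \cref{LF-to-GL}: the class $\Rcal\Fcal_n$ satisfies the closure and locality hypotheses of that lemma by \cref{rpdrfd-basics}, and $(ii)$ for $R$ clearly localizes to $(ii)$ for each $R_\m$ by restricting the witness modules. The already proven $(ii) \Rightarrow (iii)$ then gives $(iii)$ over each $R_\m$, and since any Noetherian local ring has finite Krull dimension, the already proven $(iii) \Rightarrow (i)$ yields that $\Rcal\Fcal_n(R_\m)$ satisfies $\GL$ for every maximal $\m$; \cref{LF-to-GL} then concludes. The main obstacle in this plan is the syzygy construction in $(iii) \Rightarrow (i)$, which requires precise control over $\Rpd$ and depth at $\p$ for the localized syzygy module and depends on the iterated depth lemma behaving well at $\p$; the edge case $d \leq n$ is absorbed uniformly by $k=0$, so that $N = M$.
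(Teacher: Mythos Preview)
Your proof is correct. The implications $(i)\Rightarrow(ii)$, $(ii)\Rightarrow(iii)$, and $(ii)\Rightarrow(i)$ match the paper's argument essentially verbatim. The genuine difference lies in $(iii)\Rightarrow(i)$ under $\dim(R)<\infty$.

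The paper proceeds by \emph{backward induction} on $n$: it invokes the shift formula $(\Ccal_\ef)_{(i)}=\Ccal_{\max(0,\ef-i)}$ from \cite[Proposition~4.14]{HHLG24} together with the induction hypothesis $\varinjlim\Rcal\Pcal_{n+i}^{<\omega}=\Rcal\Fcal_{n+i}$ to handle primes with $\ef_n(\p)>0$, and only uses condition $(iii)$ directly for primes with $\ef_n(\p)=0$. Your argument is more direct: for \emph{every} prime $\p$ you produce an explicit witness $N=\syz^{\ef_n(\p)}M\in\Rcal\Pcal_n^{<\omega}$ with $\depth_{R_\p}N_\p=\ef_n(\p)$, avoiding both the induction and the shift machinery. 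The depth-lemma iteration you invoke does work as stated (one checks inductively that $(\syz^iM)_\p\neq 0$ and has depth exactly $i$ for $0\le i\le k\le d$, using both inequalities of the depth lemma together with $\depth(F_i)_\p=d$); in fact only the upper bound $\depth N_\p\le k$ is needed for the conclusion. Your route is more self-contained, while the paper's route highlights how condition $(iii)$ is only needed at the ``boundary'' primes where $\ef_n$ vanishes. Incidentally, you have the inequality direction for the easy containment right: $\varinjlim\Rcal\Fcal_n^{<\omega}\subseteq\Rcal\Fcal_n$ gives $\widetilde{\ef_n}\ge\ef_n$.
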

\begin{proof}
    $(i) \implies (ii):$ This is \cref{lift-GL}.

    $(ii) \implies (iii)$: Since $k(\p)$ belongs to $\Rcal\Pcal_{\depth R_\p}^{<\omega}(R_\p)$ (follows directly from \cite[Theorem 2.4(b)]{CFF02}), $(ii)$ yields $M \in \Rcal\Pcal_{\depth R_\p}^{<\omega}(R)$ such that $M$ is a direct summand in $N_\p$. As $\ass(k(\p))=\{\p\}$, we clearly have $\p \in \ass(M)$.

    $(iii) \implies (i)$ under $d=\dim(R)<\infty$: We prove this by backward induction on $n = d, d-1, d-2, \ldots, 0$. The case of $n=d$ is vacuous as $\Rcal\Fcal_d = \Mod R$. By \cref{inducedTorpair} and \cref{HHLG-class}, there is a function $\ef: \spec R \to \ZZ_{\geq 0}$ bounded by the depth function such that $\varinjlim \Rcal\Pcal_n^{< \omega} = \{M \in \Mod R \mid \depth M_\p \geq \ef(\p) ~\forall \p \in \spec R\}$. It suffices to show that $\ef = \ef_n$. Since $\varinjlim \Rcal\Pcal_n^{<\omega} \subseteq \Rcal\Fcal_n$, we have $\ef \leq \ef_n$, so it remains to prove the other inequality.
    
    Consider first $\p \in \spec R$ such that $\ef_n(\p) > 0$, so $\depth R_\p >n$. By \cite[Proposition 4.14]{HHLG24}, $k(\p) \in \Rcal\Fcal_{n+\ef_n(\p)}$, and by the induction, we have $\varinjlim \Rcal\Pcal_{n+\ef_n(\p)}^{<\omega} = \Rcal\Fcal_{n+\ef_n(\p)} = (\varinjlim \Rcal\Pcal_n^{<\omega})_{(\ef_n(\p))}$, which immediately implies using \cite[Proposition 4.14]{HHLG24} again that $\ef(\p) \geq \ef_n(\p)$. 

    Finally, let us handle the case $\ef_n(\p) = 0$, which translates again to $k(\p) \in \Rcal\Fcal_n$. Then the finitely generated module $M$ of assumption $(ii)$ belongs to $\Rcal\Pcal_{\depth R_\p}^{<\omega} \subseteq \Rcal\Pcal_n^{<\omega}$, as $0 = \ef_n(\p) \geq \depth R_\p - n$. Since $\p \in \ass(M)$ we have $\depth M_\p = 0$, and thus $\ef(\p) = 0$.

    $(ii) \implies (i):$ 
    That $\Rcal\Fcal_n(R_\p)$ satisfies $\GL$ is shown by the implications $(ii)\implies (iii) \implies (i)$ above as $\dim (R_\p) < \infty$. Then one can apply \cref{LF-to-GL} as one can easily check that $\Rcal\Fcal_n(R_\p) = \Rcal\Fcal_n \cap \Mod {R_\p}$ and that $\Rcal\Fcal_n(R)$ is defined locally.
\end{proof}

Recall that $R$ is called \textit{almost Cohen--Macaulay} if $\dim(R_\p) - \depth R_\p \leq 1$ for all $\p \in \spec R$. Recall from \cite[Lemma 3.1]{CFF02} that $R$ is almost Cohen--Macaulay if and only if $\depth R_\p$ is equal to the grade of $\p$ for all $\p \in \spec R$.
\begin{cor}\label{thm-acm}
    Let $R$ be an almost Cohen--Macaulay ring of finite Krull dimension. Then the equivalent conditions of \cref{criterionRFn} are satisfied. In particular, $\Rcal\Fcal_n$ satisfies $\GL$ for any $n \geq 0$.
\end{cor}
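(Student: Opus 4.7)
The plan is to verify condition (iii) of \cref{criterionRFn}; since $\dim(R)<\infty$, the theorem will then yield (i) and (ii), in particular the Govorov--Lazard property for every $\Rcal\Fcal_n$. Under the almost Cohen--Macaulay hypothesis, the characterization of \cite[Lemma 3.1]{CFF02} recalled just above gives $\grade(\p)=\depth R_\p$ for every $\p\in\spec R$. So for each prime $\p$, it suffices to exhibit a finitely generated $R$-module $M$ with $\Rpd_R(M)\le \grade(\p)$ and $\p\in\ass(M)$.

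The natural candidate is $M=R/(x_1,\dots,x_g)$, where $x_1,\dots,x_g$ is a maximal $R$-regular sequence in $\p$, with $g=\grade(\p)$. The Koszul complex on this regular sequence is a free resolution of length $g$, so $\pd_R(M)\le g$, and therefore $\Rpd_R(M)\le g=\depth R_\p$. In particular, $M\in\Rcal\Pcal_{\depth R_\p}^{<\omega}$.

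For the associated prime condition, I would localize at $\p$. Regular sequences localize, so $x_1,\dots,x_g$ remains regular in $R_\p$, and since $g=\depth R_\p$ it must be a \emph{maximal} regular sequence in the local ring $(R_\p,\p R_\p)$. This forces $\depth_{R_\p}(M_\p)=0$, which in a local ring is equivalent to $\p R_\p\in\ass_{R_\p}(M_\p)$. By the standard compatibility of associated primes with localization for finitely generated modules, this in turn gives $\p\in\ass_R(M)$, completing the verification of (iii).

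The main subtlety, and the only place where the almost Cohen--Macaulay hypothesis really enters, is arranging that $\p$ \emph{itself} (and not merely some prime containing it) is associated to $M$. This reduces, after localizing, to the assertion that a regular sequence in $\p$ of length $\grade(\p)$ becomes maximal in $R_\p$, which is exactly the equality $\grade(\p)=\depth R_\p$. Without the almost Cohen--Macaulay assumption one would only have $\grade(\p)\le\depth R_\p$, and such a sequence could be extended in $R_\p$, so $M_\p$ would have positive depth and $\p$ would fail to be associated.
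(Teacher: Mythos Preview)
Your proof is correct and follows essentially the same approach as the paper: both verify condition (iii) of \cref{criterionRFn} by taking $M=R/(x_1,\dots,x_g)$ for a regular sequence of length $g=\grade(\p)=\depth R_\p$ in $\p$, noting that $\pd_R(M)\le g$ gives $M\in\Rcal\Pcal_{\depth R_\p}^{<\omega}$ and that $\depth M_\p=0$ yields $\p\in\ass(M)$. Your write-up is a bit more detailed (explicit mention of the Koszul resolution, the localization of regular sequences, and the role of the almost Cohen--Macaulay hypothesis), but the argument is the same.
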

\begin{proof}
    We check the condition $(iii)$ of \cref{criterionRFn}. Let $\p$ be a prime ideal of $R$ and $n = \depth R_\p$. Then the grade of $\p$ is also $n$ and so there is a regular sequence $(a_1,\ldots,a_n)$ of length $n$ contained in $\p$. Put $M = R/(a_1,\ldots,a_n)$. The local ring $M_\p = R_\p/(a_1,\ldots,a_n)_\p$ has depth zero and thus $\p \in \ass(M)$. It remains to show that $M \in \Rcal\Pcal_n$, but this is clear as $M \in \Pcal_n$.
\end{proof}

\begin{cor}\label{envelop2} Let $R$ be an almost Cohen--Macaulay ring of finite Krull dimension. Then, for every $n\ge 0$, $\Rcal\Fcal_n\cap \mod R$ is preenveloping in $\mod R$.  
\end{cor}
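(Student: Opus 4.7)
The plan is to mirror the proof of \cref{envelop1} almost verbatim, replacing $\Ccal\Mcal\Fcal_n$ by $\Rcal\Fcal_n$ throughout. The engine is the theorem of Crawley-Boevey \cite[Theorem (4.2)]{boev} already invoked in \cref{envelop1}: a class $\Scal \subseteq \mod R$ is preenveloping in $\mod R$ if and only if $\varinjlim \Scal \subseteq \Mod R$ is closed under products. Applied to $\Scal = \Rcal\Fcal_n \cap \mod R$, this reduces the claim to two ingredients: that $\varinjlim(\Rcal\Fcal_n \cap \mod R) = \Rcal\Fcal_n$, and that $\Rcal\Fcal_n$ is closed under products in $\Mod R$.

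The first ingredient is immediate from \cref{thm-acm}, combined with the identification $\Rcal\Fcal_n \cap \mod R = \Rcal\Fcal_n^{<\omega} = \Rcal\Pcal_n^{<\omega}$ recorded in \cref{rpdrfd-basics}(2). For the second, I would appeal to the general machinery used in the proof of \cref{envelop1}, namely \cite[Corollary 3.3]{CFF02} — which expresses $\Rfd_R(M)$ as a supremum over primes of a local depth/width quantity that behaves well with respect to arbitrary products — together with \cite[Theorem 3.2.26]{EJ11}. In \cref{envelop1} these results are routed through \cref{RfdCMfd} in order to translate $\CMfd$ into $\Rfd$; in the present setting we are already working with $\Rfd$ directly, so no such translation is required and the cited results apply verbatim.

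Assembling the pieces — $\GL$ from \cref{thm-acm}, closure under products from \cite[Corollary 3.3]{CFF02} and \cite[Theorem 3.2.26]{EJ11}, and then Crawley-Boevey — gives the statement. The only point deserving attention is the verification of closure under products in the generality of an almost Cohen--Macaulay ring of finite Krull dimension, without access to a (pointwise) dualizing module as in \cref{envelop1}; but the depth-theoretic description of $\Rfd$ used by the CFF02/EJ11 results does not require any dualizing hypothesis, so this step is purely formal and should not be a real obstacle.
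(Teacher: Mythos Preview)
Your proposal is correct and follows essentially the same approach as the paper's own proof: both invoke \cite[Corollary 3.3]{CFF02} and \cite[Theorem 3.2.26]{EJ11} for product closure of $\Rcal\Fcal_n$, then apply \cref{thm-acm} together with Crawley-Boevey \cite[Theorem (4.2)]{boev}. Your additional observation that no analogue of \cref{RfdCMfd} is needed here (since one is already working with $\Rfd$ directly) is accurate and helpful, but the underlying argument is the same.
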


\begin{proof} It follows from  \cite[Corollary 3.3]{CFF02}
and \cite[Theorem 3.2.26]{EJ11}  that   $\Rcal\Fcal_n$ is closed under products. Now \cite[Theorem (4.2)]{boev} and \Cref{thm-acm} finishes the proof.  
\end{proof}

\begin{rem}
Condition (ii) of \cref{criterionRFn} requires to check the finite lifting property for each prime. However, collecting some results, we detail which primes it remains to check satisfy the finite lifting property. 

In view of \cref{thm-acm}, it is sufficient to only consider the primes for which either $R_\p$ is not almost Cohen--Macaulay, or $\p$ a maximal element of the almost Cohen--Macaulay locus of the spectrum. 

If additionally $R$ is of finite Krull dimension, the finite lifting property is known to  hold for those associated primes for which $\p$ is associated to $R$ and is in the almost Cohen--Macaulay locus of $R$. In this case, $\Mod R_\p = \Rcal\Fcal_0$, so the finite lifting property holds for these primes by \cref{cor:rf0lift}. 
\end{rem}

\begin{rem}
    \cref{thm-acm} in particular provides a generalization of the Govorov--Lazard part of \cref{CMproj} to a finite-dimensional Cohen--Macaulay ring $R$ in the absence of a dualizing module. Sahandi, Sharif, and Yassemi \cite{SSY20} provided a generalization of the notion of the CM-flat dimension $\CMfd$ for local rings even in the absence of a dualizing module and proved that it coincides with the restricted flat dimension for local Cohen--Macaulay rings \cite[Theorem 3.3, Corollary 4.2]{SSY20}. Thus, \cref{thm-acm} provides the Govorov--Lazard property for the class of modules of Sahandi-Sharif-Yassemi CM-flat dimension bounded by any integer for a local Cohen--Macaulay ring.
\end{rem}

\begin{quest}\label{Q:Rfd}
Do the conditions of \cref{criterionRFn} (equivalent to each other, at least, if $\dim(R)<\infty)$ hold true for any commutative noetherian ring $R$?
\end{quest}

\bibliographystyle{plain}
\bibliography{mainbib}

\begin{thebibliography}{10}

\bibitem{hugel2004direct}
Lidia Angeleri~H{\"u}gel and Jan Trlifaj.
\newblock Direct limits of modules of finite projective dimension.
\newblock {\em Rings, Modules, Algebras, and Abelian Groups, LNPAM}, 236:27--44, 2004.

\bibitem{AB56}
Maurice Auslander and David~A Buchsbaum.
\newblock Homological dimension in noetherian rings.
\newblock {\em Proceedings of the National Academy of Sciences}, 42(1):36--38, 1956.

\bibitem{AIL10}
Luchezar Avramov, Srikanth Iyengar, and Joseph Lipman.
\newblock Reflexivity and rigidity for complexes, i: Commutative rings.
\newblock {\em Algebra \& Number Theory}, 4(1):47--86, 2010.

\bibitem{Bas62}
Hyman Bass.
\newblock Injective dimension in noetherian rings.
\newblock {\em Transactions of the American Mathematical Society}, 102(1):18--29, 1962.

\bibitem{BH09}
Silvana Bazzoni and Dolors Herbera.
\newblock Cotorsion pairs generated by modules of bounded projective dimension.
\newblock {\em Israel Journal of Mathematics}, 174(1):119--160, 2009.

\bibitem{Beligiannis-thick-2008}
Apostolos Beligiannis and Henning Krause.
\newblock Thick subcategories and virtually {G}orenstein algebras.
\newblock {\em Illinois J. Math.}, 52(2):551--562, 2008.

\bibitem{Bird2020}
Isaac Bird.
\newblock Two definable subcategories of maximal {C}ohen--{M}acaulay modules.
\newblock {\em Journal of Pure and Applied Algebra}, 224(6):106250, 2020.

\bibitem{Bruns/Herzog:1998}
Winfried Bruns and J\"{u}rgen Herzog.
\newblock {\em Cohen--{M}acaulay rings}, volume~39 of {\em Cambridge Studies in Advanced Mathematics}.
\newblock Cambridge University Press, Cambridge, 1998.

\bibitem{Chen17}
Xiao-Wu Chen.
\newblock Gorenstein homological algebra of artin algebras.
\newblock {\em arXiv preprint arXiv:1712.04587}, 2017.

\bibitem{CFF02}
Lars~Winther Christensen, Hans-Bj{\o}rn Foxby, and Anders Frankild.
\newblock Restricted homological dimensions and {C}ohen--{M}acaulayness.
\newblock {\em Journal of Algebra}, 251(1):479--502, 2002.

\bibitem{func}
Lars~Winther Christensen, Anders Frankild, and Henrik Holm.
\newblock On {Gorenstein} projective, injective and flat dimensions -- a functorial description with applications.
\newblock {\em J. Algebra}, 302(1):231--279, 2006.

\bibitem{boev}
William Crawley-Boevey.
\newblock Locally finitely presented additive categories.
\newblock {\em Commun. Algebra}, 22(5):1641--1674, 1994.

\bibitem{EJ11}
Edgar~E Enochs and Overtoun~MG Jenda.
\newblock {\em Relative homological algebra: Volume 1}, volume~30.
\newblock Walter de Gruyter, 2011.

\bibitem{Esmkhani2007}
Mohammad~Ali Esmkhani and Massoud Tousi.
\newblock Gorenstein homological dimensions and {A}uslander categories.
\newblock {\em J. Algebra}, 308(1):321--329, 2007.

\bibitem{G01}
Aleksander~Aleksandrovich Gerko.
\newblock On homological dimensions.
\newblock {\em Sbornik: Mathematics}, 192(8):1165, 2001.

\bibitem{GT12}
R\"{u}diger G\"{o}bel and Jan Trlifaj.
\newblock {\em Approximations and endomorphism algebras of modules\textup{:} {V}olume 1 -- {A}pproximations}, volume~41 of {\em De Gruyter Expositions in Mathematics}.
\newblock Walter de Gruyter GmbH \& Co. KG, Berlin, second revised and extended edition, 2012.

\bibitem{Gov65}
VE~Govorov.
\newblock On flat modules.
\newblock {\em Sibirskii Matematicheskii Zhurnal}, 6(2):300--304, 1965.

\bibitem{HHLG24}
Dolors Herbera, Michal Hrbek, and Giovanna~Le Gros.
\newblock Cotorsion pairs and tor-pairs over commutative noetherian rings.
\newblock {\em arXiv preprint arXiv:2411.04514}, 2024.

\bibitem{holm}
Henrik Holm.
\newblock Gorenstein homological dimensions.
\newblock {\em J. Pure Appl. Algebra}, 189(1-3):167--193, 2004.

\bibitem{Hol17}
Henrik Holm.
\newblock The structure of balanced big {C}ohen--{M}acaulay modules over {C}ohen--{M}acaulay rings.
\newblock {\em Glasgow Mathematical Journal}, 59(3):549--561, 2017.

\bibitem{HJ06}
Henrik Holm and Peter J{\o}rgensen.
\newblock Semi-dualizing modules and related {G}orenstein homological dimensions.
\newblock {\em Journal of Pure and Applied Algebra}, 205(2):423--445, 2006.

\bibitem{HJ07}
Henrik Holm and Peter J{\o}rgensen.
\newblock {C}ohen--{M}acaulay homological dimensions.
\newblock {\em Rendiconti del Seminario Matematico della Universit{\`a} di Padova}, 117:87--112, 2007.

\bibitem{HJ11}
Henrik Holm and Peter J{\o}rgensen.
\newblock Rings without a {G}orenstein analogue of the {G}ovorov--{L}azard theorem.
\newblock {\em Quarterly journal of mathematics}, 62(4):977--988, 2011.

\bibitem{hrbek2023finite}
Michal Hrbek and Giovanna Le~Gros.
\newblock The finite type of modules of bounded projective dimension and {S}erre's conditions.
\newblock {\em Bulletin of the London Mathematical Society}, 56(8):2760--2775, 2024.

\bibitem{J03}
Peter J{\o}rgensen.
\newblock Recognizing dualizing complexes.
\newblock {\em Fundamenta Mathematicae}, 176:251--259, 2003.

\bibitem{Kra01}
Henning Krause.
\newblock The spectrum of a module category.
\newblock {\em Mem. Amer. Math. Soc.}, 149(707):x+125, 2001.

\bibitem{Laz69}
Daniel Lazard.
\newblock Autour de la platitude.
\newblock {\em Bulletin de la Soci{\'e}t{\'e} Math{\'e}matique de France}, 97:81--128, 1969.

\bibitem{Lenzing83}
H.~Lenzing.
\newblock Homological transfer from finitely presented to infinite modules.
\newblock In {\em Abelian group theory ({H}onolulu, {H}awaii, 1983)}, volume 1006 of {\em Lecture Notes in Math.}, pages 734--761. Springer, Berlin, 1983.

\bibitem{miyachi1998cohen}
Jun-ichi Miyachi.
\newblock Cohen-macaulay approximations and noetherian algebras.
\newblock {\em Communications in Algebra}, 26(7):2181--2190, 1998.

\bibitem{resol}
Leonid Positselski.
\newblock Resolutions as directed colimits.
\newblock {\em Rendiconti del Seminario Matematico della Universit{\`a} di Padova}, 2024.

\bibitem{SSY20}
Parviz Sahandi, Tirdad Sharif, and Siamak Yassemi.
\newblock Cohen--{M}acaulay homological dimensions.
\newblock {\em Mathematica Scandinavica}, 126(2):189--208, 2020.

\bibitem{shaul2023acyclic}
Liran Shaul.
\newblock Acyclic complexes of injectives and finitistic dimensions.
\newblock {\em arXiv preprint arXiv:2303.08756}, 2023.

\bibitem{Tak08}
Ryo Takahashi.
\newblock On {G}-regular local rings.
\newblock {\em Communications in Algebra{\textregistered}}, 36(12):4472--4491, 2008.

\bibitem{xu}
Jinzhong Xu.
\newblock {\em Flat covers of modules}, volume 1634 of {\em Lecture Notes in Mathematics}.
\newblock Springer-Verlag, Berlin, 1996.

\end{thebibliography}

\end{document}